\newcommand*\Z{\mathbb{Z}}
\newcommand{\athir}[2]{\displaystyle \prod_{C_{#2}\subset G} H^{#1}( C_{#2},M)}
\newtheorem{De}{Definition}[section]
\newtheorem{Th}[De]{Theorem}
\newtheorem{Pro}[De]{Proposition}
\newtheorem{Le}[De]{Lemma}
\newtheorem{Co}[De]{Corollary}
\newtheorem{Rem}[De]{Remark}
\numberwithin{equation}{subsection}
\def\ss{\sigma}
\def\xto#1{\xrightarrow[]{#1}}
\def\id{\sf Id}
\def\ee{\epsilon }
\newcommand{\w}{\mbox{\tiny $\wedge$ } }
\def\t{\otimes }
\def\oo{\omega }
\def\la{\lambda }
\def\tw{{ ^{\sf tw }}}
\def\co{{ ^{\sf c }}}
\def\1{^{-1}}
\def \hom{\mathop{\sf Hom}\nolimits}
\def \ext{\mathop{\sf Ext}\nolimits}
\def\al{\alpha}
\def\La{\Lambda }
\begin{document}

\title{Symmetric cohomology of groups and Poincar\'e duality}
\author{Mariam Pirashvili} \author{Teimuraz Pirashvili}
\thanks{The second author remembers fondly the lectures by A. Zarelua on commutative algebra in 1975, which were given in Georgian, which is not the native language of A. Zarelua.}
\address{Mariam Pirashvili,	laboratory LIP6 of Sorbonne University} 
\email{mariam.pirashvili@lip6.fr}

\address{Teimuraz Pirashvili, The University of Georgia,  Tbilisi, Georgia}
\email{t.pirashvili@ug.edu.ge}
\maketitle
\begin{abstract} Let $G$ be a finite group of order $n$  and let $M$ be a $G$-module. We construct groups  $H_*^\varkappa(G,M)$ for which
	$H_k^\varkappa (G,M\tw) \cong H^{n-k-1}_\la(G,M),$
	where $M\tw$ is a twisting of a $G$-module $M$ defined in Section \ref{spd} and $H^{*}_\la(G,M)$ is a  variation of the group cohomology introduced by Zarelua \cite{zarelua}, which in many cases is isomorphic to the symmetric cohomology of groups  \cite{staic}. The groups $H_*^\varkappa(G,M)$  come together with transformations from Tate cohomology $$\varkappa_k:\hat{H}^{-k}(G,M)\to H_k^\varkappa(G,M), \, k\geq 0.$$  We show that one has an isomorphism  $\hat{H}^{0}(G,M\tw)\cong H^{n-1}_\la(G,M)$ and  an  exact sequence
	$$\bigoplus _{C_2\subset G} \hat{H}^{-1} (C_2,M\tw)\to \hat{H}^{-1} (G,M\tw)\to H^{n-2}_\la(G,M)\to 0,$$
	where the direct sum is taken over all subgroups of $G$ of order two. Moreover, if $p$ is an odd prime such that the group has no elements of order $q$, for any prime $q$ such that  $q<p$, then one has an isomorphism 
	$:H^{n-i-1}_\la(G,M)\xto{\cong } \hat{H}^i (G,M\tw), \, 0\leq i< p-1.$

\end{abstract} 


{\bf AMS classification:} 20J06 18G40.

\section{Introduction}
Let $G$ be a group and $M$ be a $G$-module. In \cite{zarelua},\cite{staic} Zarelua and Staic proposed modified group cohomologies denoted respectively by $H_\la^*(G,M)$ and 
$HS^n(G,M)$. According to \cite{jalg}, these groups together with the classical cohomology fit in the commutative diagram
$$\xymatrix{H_\la^n(G,M)\ar[rr]^{\gamma^n}\ar[dr]_{\beta^n}&& HS^n(G,M)\ar[dl]^{\alpha^n}\\
	&H^n(G,M).}$$
All maps are isomorphisms in dimensions $0$ and $1$. By \cite{jalg}, the homomorphism $\gamma^n: H_\la^n(G,M)\to HS^n(G,M)$ is a split monomorphism in general, and an isomorphism if $0\leq n\leq 4,$ or $M$ has no elements of order two. It follows that there is a natural decomposition 
\begin{equation}\label{hs=hl+hd}
HS^n(G,M)\cong H_\lambda^n(G,M)\oplus H^n_\delta(G,M).
\end{equation}
Based on the results in \cite{jalg} one can show that the map $\alpha^n$ vanishes on the mysterious summand $H^n_\delta(G,M)$ and hence all information in $HS^n$ 
relevant to the classical cohomology is already in Zarelua's theory $H_\lambda^n(G,M)$. By \cite{staic_h2}
and	\cite{jalg} the map $\beta^2$ (and hence $\gamma^2$) is a monomorphism. Moreover, if $p$ is an odd prime such that $G$ has no elements of order $q$, for $1<q<p$, then $\beta^k$ is an isomorphism for all $k<p$ and $\beta^p$ is a monomorphism. 

It follows from the very definition that if $G$ is a finite group of order $n$, then $H_\lambda^k(G,M)=0$ for all $k\geq n$. It was claimed  \cite[Theorem 3.2] {zarelua} that if $G$ is an oriented  group (see Section \ref{spd}) and $A$ is a trivial $G$-module, then for any $k>0$ one has the Poincar\'e duality: 
$$H_k^\la(G,A)\cong H_\la^{n-k-2}(G,A).$$
However this cannot be true. In fact, if we take $G$ to be Klein's Vier group $G=C_2\times C_2$ and $k=1$ then LHS equals to $H_1(G,A)=G\otimes A=(A/2A)^2$, while RHS $H^1(G,A)=Hom(G,A)=(_2A)^2$ and these groups are not isomorphic in general. Here $_2A=\{a\in A|2a=0\}$.

The aim of this work is to investigate Poincar\'e type duality in Zarelua's cohomology. Unlike \cite[Theorem 3.2] {zarelua} we do not restrict ourselves to oriented groups and trivial $G$-modules, but consider all finite groups and all modules over them. We construct groups  $H_*^\varkappa(G,M)$ for which we have the Poincar\'e duality (\ref{pd})
$$H_k^\varkappa (G,M\tw) \cong H^{n-k-1}_\la(G,M),$$
where $M\tw$ is a twisting of a $G$-module $M$ defined in Section \ref{spd}. (For oriented groups, $M\tw=M$ for any $G$-module $M$.) The groups $H_*^\varkappa(G,M)$ come together with transformations from Tate cohomology $$\varkappa_k:\hat{H}^{-k}(G,M)\to H_k^\varkappa(G,M), \, k\geq 0,$$  which happen to be an isomorphism if $k=0$ and hence we have $$ H^{n-1}_\la(G,M) \cong \hat{H}^0 (G,M\tw).$$
We will prove that the transformation $\varkappa_1$ is always an epimorphism and $Ker(\varkappa_1) $ is controlled by the elements of order two in $G$. As a consequence we obtain the following exact sequence
$$\bigoplus _{C_2\subset G} \hat{H}^{-1} (C_2,M\tw)\to \hat{H}^{-1} (G,M\tw)\to H^{n-2}_\la(G,M)\to 0,$$
where the direct sum is taken over all subgroups of $G$ of order two.

For $k\geq 2$ the transformation $\varkappa_k$ factors  trough Zarelua's homology groups
$$\hat{H}^{-k}(G,M)= H_{k-1} (G,M)\xto{\beta_{k-1}} H^\la_{k-1} (G,M)\to H_{k}^\varkappa(G,M)$$
and hence Zarelua's claim is true iff $H^\la_{k} (G,M)\to H_{k+1}^\varkappa(G,M)$ is an isomorphism. We will show that if $G$ is a group and $p\geq 5$ is a prime such that $G$ has no elements of order $q$ for all primes $q<p$, then Zarelua's claim is true for all $0<k\leq p-3$.


\section{Koszul complex and Poincar\'e dulaity for  sets}\label{pdsets} For a group $G$ the exterior algebra $\La^*(\Z[G])$ plays a prominent role in the definition of Zarelua's exterior (co)homology of groups \cite{zarelua}. Hence we recollect some well-known facts about exterior algebras.  

Recall that the exterior algebra $$\Lambda^*(A)=\bigoplus _{n}\Lambda^n(A)$$ of an abelian group $A$ is the quotient algebra of the tensor algebra $T^*(A)$ with respect to the two-sided ideal generated by the elements of the form $a\otimes a \in T^2(A)=A\otimes A$. 

There are several (co)chain complexes arising from this construction. All of them depend on some choices. Firstly, we choose an element $a\in A$. Since $a\wedge a=0$, we  have a cochain complex $(\La^*(A), \delta_a)$:
\begin{equation}\label{a_la}
0\to \Z\xto{\delta_a^0} A \xto{\delta_a^1} \La^2(A)\to \cdots \to \La^{n-1}(A) \xto{\delta_a^{n-1}}
\La^n(A)\to 0
\end{equation}
where $\delta_a^k: \La^{k}(A)\to \La^{k+1}(A)$ is given by
$$\delta^k_a(a_1\wedge \cdots \wedge a_k)=(-1)^ka\wedge a_1 \wedge \cdots \wedge a_k.$$

Secondly, we can choose a homomorphism of abelian groups $f:A\to \Z$. Then we have the Koszul complex \cite[$\S$ 9. p. 147]{ch10}
$(\Lambda^*(A), \partial_*)$,
where $\partial_{m-1}:\La ^{m}(A)\to \La^{m-1}(A)$, $m\geq 0$, is given by
$$\partial_{m-1}( a_1\wedge \cdots \wedge a_m)=\sum_{i=1}^m (-1)^{i+1}f(a_i) a_1 \wedge \cdots \wedge  \hat{a_i}\wedge\cdots \wedge a_m.$$
Here, as usual, the hat $\, \hat{}\,$ denotes a missing value.

Next, we will specify the above general constructions.  

Let us fix a set $S$ and take $A=\Z[S]$ to be a free abelian group generated by $S$. We take $f:\Z[S]\to \Z$ the homomorphism for which $f(s)=1$ for all $s\in S$. In this case the Koszul chain complex looks as follows:
$$               \cdots   \to   \La^3(\Z[S]) \xto{\partial_2}     \La^2(\Z[S]) \xto{\partial_1}   \Z[S]\xto{\partial_0}\Z \to 0
$$
and for any $s_1,\cdots,s_m\in S$ we have
$$\partial(s_1\wedge \cdots\wedge s_m)=\sum_{i=1}^m(-1)^{i+1}s_1\wedge \cdots\wedge \hat{s_i}\wedge\cdots\wedge s_m.$$
We call it the \emph{Koszul complex of  $S$} and it will be denoted by ${\sf Kos}_*(S)$. 
The following fact is well-known, see for example \cite[$\S 9$. Proposition 1. p.147]{ch10}. 

\begin{Le}\label{2122511} Let  $S$ be a nonempty set. Then for any element $c\in S$ one has
$$\delta_c^{k-1}\partial_{k-1}-\partial_k\delta_c^k=(-1)^{k-1}\id_{\La^k( \Z[S])}.$$
In particular $H_*({\sf Kos}_*(S))=0$.
\end{Le}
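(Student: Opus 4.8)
The plan is to prove the displayed operator identity by evaluating both sides on additive generators, and then to read off acyclicity from it. Since $\delta_c^{k-1},\partial_{k-1},\partial_k,\delta_c^k$ are all additive and the asserted relation is $\Z$-linear in its argument, it suffices to verify it on a generator $\omega=s_1\wedge\cdots\wedge s_k$ with $s_1,\dots,s_k\in S$, so that $f(s_i)=1$ for every $i$ and all signs are explicit.

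First I would compute the two composites separately. Applying $\partial_{k-1}$ to $\omega$ and then wedging in $c$ with the sign $(-1)^{k-1}$ gives
$$\delta_c^{k-1}\partial_{k-1}(\omega)=(-1)^{k-1}\sum_{i=1}^k(-1)^{i+1}\,c\wedge s_1\wedge\cdots\wedge\widehat{s_i}\wedge\cdots\wedge s_k,$$
while in the other order $\delta_c^k(\omega)=(-1)^k\,c\wedge s_1\wedge\cdots\wedge s_k$, and applying $\partial_k$ to this length-$(k+1)$ wedge splits into the single term in which $c$ itself is deleted (which equals $(-1)^k\omega$, since $f(c)=1$ and $c$ occupies the first slot) together with the terms in which some $s_i$ is deleted. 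The crux of the argument is the sign bookkeeping here: I expect the terms still containing $c$ to occur with identical coefficients $(-1)^{k+i}$ in the two composites, so that they cancel in the difference $\delta_c^{k-1}\partial_{k-1}-\partial_k\delta_c^k$, leaving only the $c$-deleted term $-(-1)^k\omega=(-1)^{k-1}\omega$. A conceptual way to see why this cancellation is forced, which I would use to double-check the count, is to note that $\partial_k$ is the contraction $\iota_f$ (an odd derivation of $\La^*(\Z[S])$) while $(-1)^{-k}\delta_c^k$ is left multiplication $c\wedge(-)$; the desired relation is then exactly the super-anticommutator identity $(c\wedge-)\,\iota_f+\iota_f\,(c\wedge-)=f(c)\,\id=\id$, reweighted by the factors $(-1)^k$ built into $\delta_c$. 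I do not anticipate any genuine obstacle beyond keeping these signs straight.

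Finally, for the ``in particular'' I would reinterpret the proven identity as a contracting homotopy. Setting $h_k:=(-1)^k\delta_c^k\colon\La^k(\Z[S])\to\La^{k+1}(\Z[S])$ and multiplying the identity by $(-1)^{k-1}$ rewrites it as $\partial_k h_k+h_{k-1}\partial_{k-1}=\id_{\La^k(\Z[S])}$ for $k\geq1$. At the bottom degree one has $\partial_0 h_0(1)=\partial_0(c)=f(c)=1$, so $\partial_0 h_0=\id_{\La^0}$; this is the one place where nonemptiness of $S$ (the existence of the element $c$) is used, and it forces $\partial_0$ to be surjective, whence $H_0({\sf Kos}_*(S))=\cok(\partial_0)=0$. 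Since a chain complex admitting such a chain contraction is acyclic, we conclude $H_*({\sf Kos}_*(S))=0$.
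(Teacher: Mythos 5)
Your proof is correct, and the sign bookkeeping all checks out: in both composites the terms still containing $c$ carry the identical coefficient $(-1)^{k+i}$, so the difference leaves only the $c$-deleted term $(-1)^{k-1}\omega$, and the rescaled maps $h_k=(-1)^k\delta_c^k$ then form a genuine contracting homotopy, with the bottom degree handled by $\partial_0 h_0(1)=f(c)=1$ (the one place nonemptiness of $S$ enters). The paper itself offers no proof of this lemma, simply citing \cite[\S 9, Proposition 1, p.~147]{ch10}, and your computation is precisely that standard argument --- it is also, up to the sign normalization and the specialization $c=1$, the contracting-homotopy verification the authors give for the analogous resolution lemma in \cite{jalg} --- so the two approaches coincide.
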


Assume now that $S$ is finite and denote by $n$ the cardinality of $S$.
It is well-known that $\La^k(\Z[S])$ is a free abelian group of rank $\binom{n}{k}$. In particular, $\La^k(\Z[S])=0$ for all $k\geq n+1$ and $\La^n(\Z[S])$ is an infinite cyclic group and hence has two generators. We call the set $S$ with chosen generator $\omega$ of $\La^n(\Z[S])$ an \emph{oriented} set.

Clearly any  order $S=\{t_1<t_2<\cdots <t_n\}$ on $S$
gives an orientation $\oo=t_1\wedge \cdots \wedge t_n$.
Moreover, if for a $k$-element  subset $I=\{1\leq i_1<\cdots<i_k\leq n\}$ we set 
$$t_I=t_{i_1}\wedge \cdots \wedge t_{i_k}\in  \La^k(\Z[S]),$$
then the collection of $t_I$, $|I|=k$ form  a basis of $\La^k(\Z[S])$ as an abelian group. Two orders on $S$ define the same orientation if the corresponding permutation is even.

The Koszul complex  in our situation is simply the following  exact sequence
\begin{equation}\label{par_la}
0\to \La^n(\Z[S])\xto{\partial_{n-1}} \La^{n-1}(\Z[S])\to \cdots \to \La^2(\Z[S])\xto{\partial_1} \Z[S]\xto{\partial_0} \Z\to 0.
\end{equation}
 
 Denote by ${\mathcal N}$ the sum $\sum_{s\in S}s\in \Z[S]$. We can look again to the cochain complex $(\La^*(\Z[S]), \delta_{\mathcal N})$. Recall that the homomorphism     $\delta_{\mathcal N}^k:\La^k(\Z[S])
 \to \La^{k+1}(\Z[S])$ is given by $$\delta_{\mathcal N}^k(s_1\wedge \cdots \wedge s_k)= (-1)^k{\mathcal N}\wedge   s_1\wedge \cdots \wedge s_k.$$
 In this way  we obtain the cochain complex:
 \begin{equation}\label{en_la}
 0\to \Z\xto{\delta_{\mathcal N}^0} \Z[S] \xto{\delta_{\mathcal N}^1} \La^2(\Z[S])\to \cdots \to \La^{n-1}(\Z[S]) \xto{\delta_{\mathcal N}^{n-1}}
 \La^n(\Z[S])\to  0.
 \end{equation}
 
 Our next goal is to prove that the cochain complexes (\ref{par_la}) and (\ref{en_la}) are isomorphic. The isomorphism is given by the Hodge star operator, which depends on the orientation on $S$. Recall that the Hodge star operator is an important operation on the differential forms on a Riemannian manifold, see for example \cite{gdf}. In the context of group cohomology  it already appeared in \cite{zarelua}.
 
 To  describe the Hodge star operator in our circumstances, we choose an order $S=\{t_1<t_2<\cdots <t_n\}$ compatible with a given orientation. Thus  $\oo=t_1\wedge \cdots \wedge t_n$
 and then we define the Hodge star operator
$\al_k:\La^k(\Z[G])\to \La^{n-k}(\Z[G])$  by the conditions
\begin{equation}\label{waloo}
t_I\wedge \al_k(t_I)=\oo.
\end{equation}
(Classically, the image of $\nu\in \La^k(\Z[G])$ under $\al_k$ is denoted by $\ast \nu$).
It is well-known  that this map depends only on the orientation on $S$.

 It follows from the definition that for any subset $I=\{i_1<\cdots < i_k\}$ one has
 $$\al_k(t_I)=(-1)^{\psi(I)}t_{I\co}$$
 where  $I\co$ is the complement of $I$ in $\{1,\cdots,n\}$ 
 and $ \psi (I)=(i_1-1)+\cdots +(i_k-k).$
 
 The following fact is probably well-known and we are not claiming any originality. In any case, it owes very much to \cite[Lemma 3.6]{zarelua}.

\begin{Pro} \label{hodgestar}  The collection of Hodge star operators $\al_k$ are compatible with differentials $\partial_k$ and $\delta_{\mathcal N}^k$ in the sense that one has an isomorphism of (co)chain complexes of abelian groups:
$$\xymatrix{ 0 \ar[r] & \La^n(\Z[S])\ar[r]^{\partial_{n-1}}& \La^{n-1}(\Z[S])\ar[r] & \cdots \ar[r]  &\Z[S]\ar[r]^{\partial_0}& \Z\ar[r] &0 
	\\
0\ar[r] & \Z\ar[r]^{\delta_{\mathcal N}^0} \ar[u]^{\al_0}  & \Z[S] \ar[r]  \ar[u]^{\al_1} &\cdots \ar[r]& \La^{n-1}(\Z[S]) \ar[r]^{\delta_{\mathcal N}^{n-1}} \ar[u]^{\al_{n-1}} &
\La^n(\Z[S])\ar[r]\ar[u]^{\al_n}&  0.}
$$
\end{Pro}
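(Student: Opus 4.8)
The plan is to reduce everything to the explicit sign formula $\al_k(t_I)=(-1)^{\psi(I)}t_{I\co}$ recorded above. Since the assignment $t_I\mapsto \pm t_{I\co}$ carries the chosen basis of $\La^k(\Z[S])$ bijectively onto that of $\La^{n-k}(\Z[S])$, each $\al_k$ is already an isomorphism of abelian groups; so the only content is that the squares of the diagram commute, i.e. that
$$\partial_{n-k-1}\circ \al_k=\al_{k+1}\circ \delta_{\mathcal N}^k\colon \La^k(\Z[S])\to \La^{n-k-1}(\Z[S])$$
for all $0\le k\le n-1$. Both composites are additive, so I would verify this identity on each basis element $t_I$ with $|I|=k$, and a uniform computation in $k$ will automatically cover the two boundary squares (the cases $I=\emptyset$ and $|I|=n-1$, involving $\Z$ and the orientation $\oo$).

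For the left-hand side, write $I\co=\{j_1<\cdots<j_{n-k}\}$; then $\al_k(t_I)=(-1)^{\psi(I)}t_{I\co}$ and applying $\partial_{n-k-1}$ gives $(-1)^{\psi(I)}\sum_{\ell}(-1)^{\ell+1}t_{I\co\setminus\{j_\ell\}}$, where $I\co\setminus\{j_\ell\}=(I\cup\{j_\ell\})\co$. For the right-hand side, $\delta_{\mathcal N}^k(t_I)=(-1)^k\sum_{j\notin I}t_j\wedge t_I$ (the terms with $j\in I$ vanish); reordering $t_j\wedge t_I$ into increasing order contributes the sign $(-1)^{p(j)}$, where $p(j)=\#\{i\in I: i<j\}$, so that $t_j\wedge t_I=(-1)^{p(j)}t_{I\cup\{j\}}$. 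Applying $\al_{k+1}$ then yields $(-1)^k\sum_{j\notin I}(-1)^{p(j)}(-1)^{\psi(I\cup\{j\})}t_{(I\cup\{j\})\co}$. Thus both sides are sums over $j\notin I$ of integer multiples of the \emph{same} basis vector $t_{(I\cup\{j\})\co}$, and the proposition reduces to matching their coefficients term by term.

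The heart of the matter — and the only place where care is really needed — is the sign bookkeeping. Fixing $j\notin I$ and letting $\ell$ be its position in $I\co$, I would establish two elementary counting identities: $\ell=j-p(j)$, by counting how many of $1,\dots,j$ lie in $I$ versus in $I\co$; and $\psi(I\cup\{j\})=\psi(I)+j-k-1$, by tracking how inserting $j$ at the $(p(j)+1)$-st slot shifts each later index in the defining sum for $\psi$. Granting these, the exponent on the left is $\psi(I)+\ell+1$, while the one on the right reduces via the second identity to $\psi(I)+p(j)+j-1$; their difference is $(\ell+1)-(p(j)+j-1)=2-2p(j)$ after substituting $j=\ell+p(j)$, hence is even. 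Therefore the coefficients agree for every $j$, each square commutes, and the $\al_k$ assemble into the claimed isomorphism of (co)chain complexes.
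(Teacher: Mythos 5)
Your proof is correct. It verifies exactly what the paper's proof verifies --- the square-by-square identity $\partial_{n-k-1}\circ\al_k=\al_{k+1}\circ\delta_{\mathcal N}^k$ on basis elements, together with the easy observation that each $\al_k$ is a bijection on bases up to sign --- but by a different computational device. The paper fixes $I$ and invokes the orientation-invariance of the Hodge star to reorder $S$ so that $I=\{1<\cdots<k\}$ becomes an initial segment; after this normalization both composites are computed in a couple of lines, each equal to $\sum_{i=1}^{n-k}(-1)^{i-1}t_{k+1}\wedge\cdots\wedge\hat{t}_{k+i}\wedge\cdots\wedge t_n$, and essentially no sign bookkeeping occurs. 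You instead keep $I$ arbitrary and push the explicit formula $\al_k(t_I)=(-1)^{\psi(I)}t_{I\co}$ through both composites; your two counting identities $\ell=j-p(j)$ and $\psi(I\cup\{j\})=\psi(I)+j-k-1$ are both correct, and the resulting exponent difference $2-2p(j)$ is indeed even, so the coefficients of $t_{(I\cup\{j\})\co}$ match term by term. The trade-offs: the paper's route is shorter, but it rests on the fact (stated as well-known, not proved there) that $\al_k$ depends only on the orientation, and strictly speaking one should also note that reversing the orientation flips every $\al_k$ by the same sign, so the commutativity of the squares is insensitive to which orientation the normalized order induces; your route needs only the displayed basis formula for $\al_k$, and it treats all squares --- including the boundary cases $I=\emptyset$ and $|I|=n-1$ --- by one uniform computation, at the price of genuine combinatorial sign-tracking.
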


\begin{proof} Take a subset $I=\{i_1<\cdots < i_k\}$, $k<n$. We need to show that $$\partial_{n-k-1}\al_k(t_I)= \al_{k+1} \delta_{\mathcal N}^k(t_I).$$
Since $\al_k$ depends only on the orientation of $S$, we can change the ordering on $S$, if it is necessary, to assume $I=\{1<\cdots<k\}$. In this case we obtain
\begin{align*} \al_{k+1} \delta_{\mathcal N}^k(t_I)&= \al_{k+1}\left ((-1)^k\sum_{i=1}^{n-k} t_{k+i}\wedge t_1\wedge \cdots \wedge t_k\right)\\
& =\sum_{k=1}^{n-k}  \al_{k+1}\left ( t_1\wedge \cdots \wedge t_k \wedge t_{k+i}\right )\\
&= \sum_{i=1}^{n-k}(-1)^{i-1}t_{k+1}\wedge \cdots \wedge \hat{t}_{k+i} \wedge \cdots \wedge t_n.
\end{align*}
On the other hand, we also have
\begin{align*}
\partial_{n-k-1}\al_k(t_I)&= \partial_{n-k-1}(t_{k+1}\wedge \cdots \wedge t_n)\\
&=\sum_{i=1}^{n-k}(-1)^{i-1}t_{k+1}\wedge \cdots \wedge \hat{t}_{k+i} \wedge \cdots \wedge t_n
\end{align*}
and we are done.
\end{proof}
\section{Classical and Zarelua's (co)homology of groups}

In this section we recall the definition of the classical and  Zarelua's (co)homology of groups.  

Let $G$ be a group. We consider $\Z$ as a $G$-module, with trivial action of $G$. Denote $P_k= \Z[G^{k+1}]$, which is considered as a $G$-module via the action
$$g(g_0,\cdots, g_k)=(gg_0,\cdots, gg_k)
$$
and consider the standard projective resolution of $\Z$ by $G$-modules  \cite{aw}
\begin{equation}\label{stpr}
\cdots \to P_k \xrightarrow{\partial}P_{k-1} \to\cdots \to P_0\xrightarrow{\epsilon}\Z,
\end{equation}
where 
%
the boundary map is given by
$$\partial(g_0,\cdots ,g_k)=\sum_{i=0}^k(-1)^i(g_0,\cdots ,g_{i-1},g_{i+1},\cdots , g_k),$$
and the mapping $\epsilon$ sends each generator $g$ to $1\in\Z$. 
For any $G$-module $M$ we apply the functor $\hom_G(-,M)$ (resp. $(-)\otimes_GM$ ) to the standard projective resolution to obtain the cochain complex $K^*(G,M)$ (resp. chain complex $K_*(G,M)$) and then one defines the  cohomology and homology of a group $G$ with coefficients in $M$ by
$$H^*(G,M)=H^*(K^*(G,M)) \quad {\rm and} \quad H_*(G,M)=H_*(K_*(G,M)).$$

Zarelua  uses the Koszul complex ${\sf Kos}^*(G)$ of Section \ref{pdsets} to define his (co)homology groups as follows.  Firstly, the abelian groups $\La^k(\Z[G])$, $k\geq 0$, have the natural $G$-module structure given by
$$g \cdot x_1\wedge \cdots \wedge x_k=
gx_1\wedge \cdots \wedge gx_k.$$
Let us denote by ${\sf Kos}^+_*(G)$ the chain complex
$$ \cdots   \to   \La^3(\Z[S]) \xto{\partial}     \La^2(\Z[S]) \xto{\partial}   \Z[S].
$$
By Lemma \ref{2122511} one can look at ${\sf Kos}^+_*(G)$ as a (non-projective) resolution of $\Z$ in the category of $G$-modules and then one puts
$$K^*_\la (G,M)=\hom_G({\sf Kos}^+_*(G), M), \quad K_*^\la (G,M) ={\sf Kos}^+_*(G)\otimes _G M.$$
Zarelua's (co)homology groups are defined by
$$H^*_\la(G,M):=H^*(K^*_\la (G,M))
\quad  {\rm and}\quad  H_*^\la(G,M):=H_*(K_*^\la(G,M)).$$
We have  a morphism of projective resolutions,  see \cite[Lemma 3.3]{jalg},
$$\xymatrix{\cdots \ar[r]& \Z[G^i] \ar[r]\ar[d]& \Z[G^{i-1}]\ar[r]\ar[d]&\cdots\ar[r] &  \Z[G^{2}] \ar[r]\ar[d]& \Z[G]\ar[d]^{Id}\\
\cdots \ar[r]& \Lambda^i\Z[G] \ar[r]& \Lambda^{i-1}\Z[G]\ar[r]&\cdots\ar[r] & \Lambda^2 \Z[G] \ar[r]& \Z[G]}$$
and hence one obtains natural transformations
$$\beta^*: H^*_\la(G,M)\to H^*(G,M)\quad {\rm and} \quad 
\beta_*:H_*(G,M)\to H_*^\la(G,M).$$
We refer to \cite{jalg} for extensive information about $\beta^*$. It was not stated in \cite{jalg} but all results obtained in \cite{jalg} have obvious analogues for $\beta_*$. In particular, $\beta_0,\beta_1$ are isomorphisms, while $\beta_2$ fits in the exact sequence
$$\bigoplus_{C_2\subset G} H_2(C_2,M)\to H_2(G,M)\xto{\beta_2} H^\la_2(G,M)\to 0,$$
where $C_2$ runs trough all subgroups of order two, etc. 

{\bf Example}. Based on the  exact sequence in \cite[Corollary 4.4i) ]{jalg} and \cite[Exercise 5. p. 66]{fg} we see that 
$H^2_\la (D_n,\mathbb{F}_2)=0$ for all dihedral groups $D_n$. 
\section{Tate cohomology and $H_*^\varkappa(G,M)$ groups}
In what follows we restrict ourselves to the case when $G$ is a finite group of order $n$. Recall that in this case we also have the Tate cohomology groups $\hat{H}^*(G,M)$, which combine the aforementioned homology and cohomology of groups. They are defined by
$$\hat{H}^k(G,M)=\begin{cases} H^k(G,M) & {\rm if } \ k>0\\
Coker \,{\sf N} & {\rm if } \ k=0\\
Ker\, {\sf N}, & {\rm if } \ k=-1\\
H_{-k-1}(G,M) & {\rm if } \ k<-1\\
\end{cases}
$$
where ${\sf N}$ is the norm homomorphism
$${\sf N}:H_0(G,M)\to H^0(G,M)$$
induced by the map $M\to M$ given by $m\mapsto \sum_{g\in G}gm={\mathcal N}m$. 

We also need the following equivalent description of Tate cohomology \cite{aw}. For any abelian group $A$ denote by $A^\vee$ the ''dual'' abelian group $\hom(A,\Z)$. It is well-known that the natural map $A\to (A^\vee)^\vee$ is an isomorphism if $A$ is a finitely generated free abelian group. Observe that if $A$ is a $G$-module, then $A^\vee$ is also a $G$-module with the action
$$(g\cdot \psi)(a)=\psi(g\1a)$$
where $a\in A$ and $\psi\in A^\vee$.

Since the standard resolution $P_*\to \Z\to 0$ splits as a chain complex of abelain groups, after dualizing we still obtain an exact sequence
$$0\to \Z \to P_*^\vee.$$
 We can eliminate $\Z$ to get an exact sequence
 $$\cdots \to P_1\to P_0\xto{\sf N} P_0^\vee \to P_1^\vee \to \cdots$$
and after reindexing $P_k:=P_{-k-1}^\vee$ for $k<0$, one obtains \cite[p.103]{aw} 
 $$\hat{H}^k(G,M)=H^k(\hom_G(P_*,M)), \,k\in\ Z.$$
Equivalently, for any $k\leq 0$ for the Tate cohomology groups $\hat{H}^k(G,M)$ we have  
\begin{equation}\label{tta}
\hat{H}^k(G,M)\cong H_k\left ( \hom_G(\Z,M)\leftarrow \hom_G(P_0^\vee, M) \leftarrow \hom_G(P_1^\vee, M) \leftarrow \cdots
\right ).
\end{equation}

Now we  introduce  groups $H_*^\varkappa(G,M)$ which are defined as follows.
We consider the exact sequence (\ref{en_la}) for $S=G$:
$$0\to \Z\xto{\delta_{\mathcal N}^0} \Z[G] \xto{\delta_{\mathcal N}^1} \La^2(\Z[G])\to \cdots \to \La^{n-1}(\Z[G]) \xto{\delta_{\mathcal N}^{n-1}}
\La^n(\Z[G])\to  0$$
which is denoted by $C^*(G)$. Thus $C^k=\La^k\Z[G]$, $0\leq k\leq n$.
\begin{Le} The boundary maps $\delta_{\mathcal N}^i$ are $G$-homomorphisms, $0\leq i\leq n-1$.
\end{Le}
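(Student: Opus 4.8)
The plan is to verify the defining relation $\delta_{\mathcal N}^i(g\cdot\xi)=g\cdot\delta_{\mathcal N}^i(\xi)$ directly on the additive generators $\xi=s_1\wedge\cdots\wedge s_i$ of $\La^i(\Z[G])$. Since both $\delta_{\mathcal N}^i$ and the $G$-action are additive in $\xi$, checking the identity on generators suffices, and the whole statement reduces to a single bilinear bookkeeping computation.

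The crux of the argument is the observation that the element $\mathcal N=\sum_{s\in G}s\in\Z[G]$ is $G$-invariant. Indeed, for any $g\in G$ the action on $\Z[G]$ is induced by left translation $s\mapsto gs$, which is a bijection of the underlying set $G$; hence $g\cdot\mathcal N=\sum_{s\in G}gs=\sum_{t\in G}t=\mathcal N$. This is the only conceptual input; everything else is a formal consequence of the multilinearity of the wedge product and of the definition $g\cdot(x_1\wedge\cdots\wedge x_k)=gx_1\wedge\cdots\wedge gx_k$.

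Next I would compute both sides and compare. On the one hand, using multilinearity of the action together with $g\cdot\mathcal N=\mathcal N$, one gets $g\cdot\delta_{\mathcal N}^i(s_1\wedge\cdots\wedge s_i)=(-1)^i(g\cdot\mathcal N)\wedge gs_1\wedge\cdots\wedge gs_i=(-1)^i\mathcal N\wedge gs_1\wedge\cdots\wedge gs_i$. On the other hand, $\delta_{\mathcal N}^i(g\cdot(s_1\wedge\cdots\wedge s_i))=\delta_{\mathcal N}^i(gs_1\wedge\cdots\wedge gs_i)=(-1)^i\mathcal N\wedge gs_1\wedge\cdots\wedge gs_i$. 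The two expressions coincide, which establishes the claim for $0\leq i\leq n-1$.

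There is no genuine obstacle here: the assertion is essentially a verification, and the single point that makes it work is the $G$-invariance of $\mathcal N$. I would note that the very same reasoning simultaneously shows that $C^*(G)$ is a complex of $G$-modules and $G$-homomorphisms, which is precisely what is needed to legitimately apply $\hom_G(-,M)$ and $(-)\otimes_G M$ to it in the constructions that follow.
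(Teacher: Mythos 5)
Your proof is correct and follows essentially the same route as the paper's: the paper's argument is exactly the re-indexing $\sum_{h\in G}h=\sum_{h\in G}gh$ (i.e.\ the $G$-invariance of $\mathcal N$) combined with the fact that the diagonal action satisfies $g\cdot(x_1\wedge\cdots\wedge x_{i+1})=gx_1\wedge\cdots\wedge gx_{i+1}$, which is precisely your computation. The only cosmetic difference is that you isolate $g\cdot\mathcal N=\mathcal N$ as a standing lemma before comparing the two sides, whereas the paper performs the re-indexing inline under the wedge.
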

\begin{proof} It is required to check
$${\mathcal N}(gx_1\wedge \cdots \wedge gx_i) =g({\mathcal N}(x_1\wedge \cdots \wedge x_i)).$$
Since $\sum_{h\in G}h=\sum_{h\in G}gh$, we have

\begin{align*} {\mathcal N}(gx_1\wedge \cdots \wedge gx_i)& = 
\sum_{h\in G}h\wedge gx_1\wedge \cdots \wedge gx_i\\
&=\sum_{h\in G}gh\wedge gx_1\wedge \cdots \wedge gx_i\\
&=g{\mathcal N}(x_1\wedge \cdots \wedge x_i).
\end{align*}
\end{proof}
Since $C^*(G)$ is a cochain complex of $G$-modules we can apply the functor $\hom_G(-,M)$ to obtain a new chain complex
$$\hom_G(\Z,M)\leftarrow \hom_G(\Z[G],M)\leftarrow  \hom_G(\La^2\Z[G],M)\leftarrow \cdots \leftarrow  \hom_G(\La^n\Z[G],M)\leftarrow 0,$$
which is denoted by $K_*^\varkappa(G,M)$. Now we set
$$H_*^\varkappa(G,M):=H_*(K_*^\varkappa(G,M)).$$

We are now going to construct natural transformations from the Tate cohomology and Zarelua's homology into $H^\varkappa_*(G,M)$. The construction is based on the following well-known lemma. 
\begin{Le} \label{4212}  Let $M$ be a $G$-module and $L_*$ a chain complex of finitely generated $G$-modules. Assume each $L_k$ is free as an abelian group. Then the map
$${\mathfrak t}: L_*\otimes _G M\to \hom_G(L_*^\vee, M)
$$
is a natural chain map, which is an isomorphism if each $L_k$ is a projective $G$-module, where $\mathfrak t$ is the composite of the following maps
$$L_*\otimes _GM=H_0(G, L_*\otimes M)\xto{\mathcal N}
H^0(G, L_*\otimes M)\xto{\mathfrak s} H^0(G, \hom(L_*^\vee,M))=\hom_G(L_*^\vee,M)$$	and ${\mathfrak s}:L_*\otimes M\to \hom(L_*^\vee,M)$ is given by
$${\mathfrak s}(x\otimes m) (\psi)	 =\psi(x)m.$$
Here $x\in L, m\in M$ and $\psi\in L^\vee=Hom(L,\Z)$.
\end{Le}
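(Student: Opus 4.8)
The plan is to verify that $\mathfrak{t}$ is a natural chain map in general, and that it is an isomorphism in the projective case, by treating the two composite constituents $\mathcal{N}$ and $\mathfrak{s}$ separately. First I would unwind the definition of $\mathfrak{t}$ on a single tensor $x \otimes m$ with $x \in L_k$, $m \in M$. Following the prescription, $\mathfrak{s}(x \otimes m) \in \hom(L_k^\vee, M)$ is the functional $\psi \mapsto \psi(x)m$, and then applying the norm $\mathcal{N}$ (averaging over $G$, as in the map $H_0(G,-) \to H^0(G,-)$) produces the $G$-homomorphism $\psi \mapsto \sum_{g \in G} g\bigl(\psi(g^{-1}x)\,m\bigr)$; I would record this explicit formula, since every subsequent check reduces to manipulating it. The finite-generation hypothesis and freeness over $\Z$ guarantee that the double-dual map $L_k \to (L_k^\vee)^\vee$ is an isomorphism, so the targets $\hom(L_k^\vee, M)$ and $\hom_G(L_k^\vee, M)$ are the legitimate objects appearing in the description \eqref{tta} of Tate cohomology.

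Next I would check naturality in $M$, which is immediate since both $\mathfrak{s}$ and $\mathcal{N}$ are defined by formulas that commute with any $G$-map $M \to M'$ post-composed in the $M$-slot; and naturality in $L_*$, meaning compatibility with the differential. Here the point is that $\mathfrak{s}$ and $\mathcal{N}$ are evaluation-and-averaging constructions that are functorial in the variable module, so $\mathfrak{t}$ is obtained by applying a sequence of natural transformations of functors to the chain complex $L_*$; being a composite of natural transformations, it automatically commutes with the differentials $\partial \otimes \id$ on the source and the dual differentials $\hom(\partial^\vee, \id)$ on the target, giving the chain-map property. The honest way to present this is to say that $\mathfrak{s}$ is a natural transformation of functors $(-) \otimes M \Rightarrow \hom((-)^\vee, M)$ on finitely generated free abelian groups, and the norm is natural in the coefficient, so their composite inherits naturality.

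The crux is the last assertion: if each $L_k$ is a projective $G$-module then $\mathfrak{t}$ is an isomorphism. For this I would reduce to the case of a single free $G$-module $L = \Z[G]^{\oplus r}$, since both functors $(-) \otimes_G M$ and $\hom_G((-)^\vee, M)$ are additive and a projective is a summand of a free module, so isomorphy is inherited by retracts. By additivity it then suffices to treat $L = \Z[G]$. For $L = \Z[G]$ one computes directly: $\Z[G] \otimes_G M \cong M$ via $g \otimes m \mapsto g^{-1}m$ (or the appropriate normalization), while $\Z[G]^\vee \cong \Z[G]$ as $G$-modules and $\hom_G(\Z[G], M) \cong M$ by evaluation at the identity; I would check that $\mathfrak{t}$ carries the first isomorphism to the second, i.e. that it is precisely the identification $M \xrightarrow{\cong} M$ under these standard identifications. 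The main obstacle I anticipate is bookkeeping of the $G$-action and the averaging in this base case: one must confirm that the sum $\sum_{g} g(\psi(g^{-1}x)m)$ collapses correctly on the free generator and that the sign and inverse conventions in the actions on $\Z[G]$, on $M$, and on the dual $\Z[G]^\vee$ all line up so that no extra norm factor survives. Once the $\Z[G]$ case is pinned down, additivity and the retract argument finish the proof, and the general chain-map and naturality statements follow from the functorial description established in the preceding paragraphs.
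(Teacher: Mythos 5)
Your proposal is correct, and it is genuinely more self-contained than the paper's treatment: the paper offers no argument at all for this lemma, simply citing Atiyah--Wall \cite[p.~103]{aw}, where the fact is said to be implicitly contained. Your route --- naturality of $\mathfrak{s}$ in the module variable and of the norm in the coefficients, hence the chain-map property formally; then additivity of the two functors $(-)\otimes_G M$ and $\hom_G((-)^\vee,M)$, passage to retracts, and an explicit computation for $L=\Z[G]$ --- is exactly the standard argument that the citation hides, and it buys the reader a verifiable proof where the paper gives only a pointer. The reduction is sound: every finitely generated projective $G$-module is a retract of some $\Z[G]^{\oplus r}$, and the naturality you establish first is precisely what transports the isomorphism across direct sums and retracts. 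The base case also collapses as you predict: $\Z[G]^\vee\cong\Z[G]$ via the dual basis, $\Z[G]^\vee$ is generated as a $G$-module by the functional $\delta_1$ dual to $1\in G$, and $\mathfrak{t}(1\otimes m)(\delta_1)=m$, so under the standard identifications $\mathfrak{t}$ is the identity of $M$.

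One bookkeeping correction, at exactly the spot you flagged as the likely trouble: your displayed formula for the composite is off by an inverse. With the paper's convention $(g\cdot\psi)(a)=\psi(g\1 a)$, one has $(g\1\cdot\psi)(x)=\psi(gx)$, so $(g\cdot\mathfrak{s}(x\otimes m))(\psi)=g\bigl(\mathfrak{s}(x\otimes m)(g\1\cdot\psi)\bigr)=g\bigl(\psi(gx)m\bigr)=\psi(gx)\,gm$, and therefore $\mathfrak{t}(x\otimes m)(\psi)=\sum_{g\in G}\psi(gx)\,gm$, not $\sum_{g\in G}g\bigl(\psi(g\1 x)m\bigr)$. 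The expression you wrote is not $G$-invariant (conjugating by $h$ scrambles the two occurrences of $g$ differently), so it does not even land in $\hom_G(L^\vee,M)$, whereas the correct expression is a norm and hence invariant. This slip does not damage the argument: the two formulas agree on the generator in the base case (taking $x=1$ and $\psi=\delta_1$ forces $g=1$ in either sum), and every structural step of your proof goes through verbatim once the formula is fixed.
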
 
\begin{proof} See  \cite[p.103]{aw} for an explicitly written proof of this fact, which implicitly is there. 
\end{proof}

By Lemma \ref{2122511} and Proposition \ref{hodgestar} we see that the exact sequence $C^*(G)$ splits as a chain complex of abelian groups. Hence we can dualize it to get an exact sequence 
$$0\leftarrow \Z \leftarrow C_1^\vee \leftarrow \cdots \leftarrow C_n^\vee \leftarrow 0.$$
In fact, this is an exact sequence of $G$-modules and hence it can be considered as a (nonprojective) resolution of $\Z$. By a well-known property of projective resolutions, the identity map $\Z\to \Z$ has a lifting as a morphism of chain complexes 
$$\xymatrix{
	0 &  \Z \ar[l]\ar[d]_{id}& P_0 \ar[l]\ar[d] &P_1 \ar[d]\ar[l]&\ar[l] &\cdots \\
	0 &  \Z \ar[l]& C_1^\vee \ar[l] &C_2^\vee \ar[l]&\ar[l] &\cdots 
}$$
By dualizing, we obtain the diagram 
$$\xymatrix{
	0 \ar[r] &  \Z \ar[r]\ar[d]_{id}& C_1  \ar[r]\ar[d] &C_2 \ar[d]\ar[r]& \cdots \\
	0 \ar[r] &  \Z \ar[r]& P_0^\vee \ar[r] &P_1^\vee \ar[r]& \cdots 
}$$
By applying the functor $\hom_G(-,M)$ and taking the homology functor one obtains natural homomorphisms $$\varkappa_k:\hat{H}^{-k}(G,M)\to H_k^\varkappa (G,M), \, k\geq 0.$$
The next aim is to prove that the maps $\varkappa_*$ factor through Zarelua's homology groups.
 To this end, we use Lemma \ref{4212} first when  $L_*=C_*^\vee$ and then when $$L= P^{aug}_*=\xymatrix{ 0 &  \Z \ar[l]& P_0 \ar[l] &P_1 \ar[l]&\ar[l] & \cdots}$$ to obtain the commutative diagram of chain complexes
 $$\xymatrix{P^{aug}_*\otimes_G M\ar[r] \ar[d] & \hom_G(P^{{aug}_*\vee},M)\ar[d]\\
 C_*^\vee\otimes_G M\ar[r]& \hom_G(C_*,M)
}$$
Since $P_n$ are projective $G$-modules, we see that the top map after taking homology induces an isomorphism in  dimensions $\geq 2$. So we obtain the diagram
$$\xymatrix{H_k(G,M)\ar[r]^{\cong}\ar[d]_{\beta_k} & \hat{H}^{-k-1}(G,M)\ar[d]_{\varkappa^{k+1}}\\
H^{\la}_k(G,M) \ar[r]^{\zeta_k} &H^{\varkappa}_{k+1}(G,M)
}$$
Hence $\varkappa_{k+1}=\zeta_{k}\beta_{k}$,{\tiny } $k\geq 1$. 
\begin{Le} \label{ttvkap}
	\begin{enumerate}
		\item [i)]  For any group $G$ and any $G$-module $M$ the homomorphism $$ \varkappa_0:\hat{H}^{0}(G,M)\to H_0^\varkappa (G,M)$$ is an isomorphism.
		
	\item [ii)] 
	Let $k\geq 2$ and let $G$ be a group such that the $G$-modules 
	$\La^i(\Z[G])$ are projective $G$-modules for all $2\leq i\leq k$. Then for any $G$-module $M$ the homomorphism $\varkappa_i$ is an isomorphism for all $0\leq i <k$.
		\end{enumerate} 
 \end{Le}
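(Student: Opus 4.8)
The two parts call for different arguments, so the plan is to dispatch (i) by a direct degree-$0$ computation and (ii) by a comparison of resolutions valid in a range of degrees.

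For (i), note that $H_0^\varkappa(G,M)$ is the cokernel of the differential $\hom_G(\Z[G],M)\to\hom_G(\Z,M)=M^G$, which is $\hom_G(\delta_{\mathcal N}^0,M)$. A $G$-homomorphism $\phi\colon\Z[G]\to M$ is determined by $\phi(e)\in M$, and since $\delta_{\mathcal N}^0(1)={\mathcal N}$ we get $\phi({\mathcal N})=\sum_{g\in G}g\phi(e)={\mathcal N}\phi(e)$; hence the image is ${\mathcal N}M$ and $H_0^\varkappa(G,M)=M^G/{\mathcal N}M=\hat H^0(G,M)$. The same computation along the bottom row of the dualizing diagram, using $\hom_G(P_0^\vee,M)\cong\Z[G]\otimes_GM=M$, shows the image there is again ${\mathcal N}M$. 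As the comparison map is the identity on $\Z$ in degree $0$, $\varkappa_0$ is the identity of $M^G/{\mathcal N}M$, so it is an isomorphism; this requires no hypothesis on $G$ beyond finiteness.

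For (ii), the first step is to locate the projective range of $C_*^\vee$. By Lemma \ref{2122511} and Proposition \ref{hodgestar} the complex $C^*(G)$ is exact and splits over $\Z$, so dualizing yields the resolution $C_*^\vee$ of $\Z$ whose term in homological degree $j$ is $(\La^{j+1}\Z[G])^\vee$. For $G$ finite, $(-)^\vee=\hom(-,\Z)$ is an exact self-duality on finitely generated $\Z$-free $G$-modules (as $\Z[G]^\vee\cong\Z[G]$), so it preserves projectivity; thus $(\La^{j+1}\Z[G])^\vee$ is projective exactly when $\La^{j+1}\Z[G]$ is. Since $\La^1\Z[G]=\Z[G]$ is free and $\La^{i}\Z[G]$ is projective for $2\le i\le k$ by hypothesis, $C_*^\vee$ is projective in homological degrees $0,1,\dots,k-1$. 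Hence both $P_*$ and $C_*^\vee$ are resolutions of $\Z$ that are projective through degree $k-1$. The second step is the comparison theorem in this range: I would choose lifts $\phi\colon P_*\to C_*^\vee$ and $\psi\colon C_*^\vee\to P_*$ of $\id_\Z$, the latter defined in degrees $\le k-1$ where the terms of $C_*^\vee$ are projective, and then produce chain homotopies $\psi\phi\simeq\id$ and $\phi\psi\simeq\id$ whose operators are likewise defined in degrees $\le k-1$. By construction $\varkappa$ is obtained by applying the functor $F=\hom_G((-)^\vee,M)$ to $\phi$: Lemma \ref{4212} identifies $F(P^{aug}_*)$ with the complex (\ref{tta}) computing $\hat H^{-*}(G,M)$ and $F(C_*^\vee)=\hom_G(C^*(G),M)$ with $K_*^\varkappa(G,M)$, and $F$ sends $\phi$ to the dualized comparison map inducing $\varkappa$. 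Since $F$ is additive it preserves chain homotopies, so the partial homotopy equivalence passes through $F$ and yields mutually inverse maps on $H_i$ for all $i\le k-1$; therefore $\varkappa_i$ is an isomorphism for $0\le i<k$.

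The hard part will be the low-degree bookkeeping around the augmentation. Lemma \ref{4212} is an isomorphism only on the projective terms and degenerates to the norm map $M_G\to M^G$ on the object $\Z$ being resolved, so one must check by hand that $F(P^{aug}_*)$ really reproduces (\ref{tta}) at degrees $0$ and $1$, the range where the norm, $\hat H^0=\cok{\sf N}$, and $\hat H^{-1}=\ker{\sf N}$ live and where part (i) already settles the degree-$0$ case. The second delicate point is the exact range: the homotopy operators are guaranteed only in degrees $\le k-1$, and one must verify that this still forces an isomorphism on $H_{k-1}$ itself, rather than only on $H_{k-2}$ as a crude mapping-cone count would suggest. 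Tracking this off-by-one carefully is what pins down the sharp bound $i<k$ in the statement.
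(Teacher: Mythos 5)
Your proposal is correct and takes essentially the same route as the paper: part (i) is the paper's own degree-zero computation (identifying $K_1^\varkappa(G,M)\to K_0^\varkappa(G,M)$ with the norm map $M\to H^0(G,M)$, plus your welcome explicit check that the comparison map is the identity there), and part (ii) fleshes out the paper's one-line argument that $P_*\to \Z$ and $C_*^\vee$ are homotopy equivalent up to dimension $k$, via duality preserving projectivity over $\Z[G]$ and the comparison theorem. Your flagged off-by-one worry does resolve in your favour, and for a concrete reason: because $F=\hom_G((-)^\vee,M)$ dualizes before homming, $H_i(F(\text{resolution}))$ depends on the resolution only in degrees $\leq i$, and tracing the cycle/boundary manipulations shows that the identities needed to prove injectivity and surjectivity in homological degree $i$ use only $\psi_j$, $h_j$, $h'_j$ with $j\leq i\leq k-1$, all of which your comparison-theorem step provides.
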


\begin{proof} i) By definition we have  an exact sequence
	$$M\xto{\sf N} H^0(G,M)\to \hat{H}^0(G,M)\to 0.$$
Next, we have  $$K_0^\varkappa(G,M)=\hom_G(\Z,M)=H^0(G,M)$$
and  $$K_1^\varkappa(G,M)=\hom_G(\Z[G],M)=M.$$
Since the boundary map  $K_1^\varkappa(G,M)\to K_0^\varkappa(G,M)$  is induced by the norm map, we obtain
	$$
	H_0^\varkappa(G,M)=\hat{H}^0(G,M).
	$$

ii) It follows from the basic properties of projective resolutions that the 
 augmented chain complexes $P_*\to \Z$ and $C_*^\vee$ are homotopy equivalent up to dimension $k$ and hence the result. 
\end{proof}

\begin{Co}\label{43}  If the group has no elements of order two, then $$\varkappa_1:\hat{H}^{-1}(G,M)\to H_1^\varkappa (G,M)$$ is an isomorphism. More generally, if $p$ is an odd prime, such that for all prime $q<p$ the group $G$ has no elements of order $q$, then $\varkappa_k$ is an isomorphism for all $k<p-1$.	  
\end{Co}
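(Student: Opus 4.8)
The plan is to deduce everything from part~ii) of Lemma~\ref{ttvkap}, so that the entire problem reduces to determining, in terms of the element orders in $G$, precisely when the $G$-module $\La^i(\Z[G])$ is projective. The starting observation is that $\La^i(\Z[G])$ is a \emph{signed permutation module}: after fixing an order on $G$, a $\Z$-basis of $\La^i(\Z[G])$ is given by the wedges $g_{j_1}\wedge\cdots\wedge g_{j_i}$ indexed by the $i$-element subsets $T=\{g_{j_1},\dots,g_{j_i}\}$ of $G$, and the diagonal left action $g\cdot(x_1\wedge\cdots\wedge x_i)=gx_1\wedge\cdots\wedge gx_i$ carries such a basis vector to $\pm$ the basis vector attached to $gT$. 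Thus $G$ acts on the basis, up to sign, through its left-translation action on the set of $i$-element subsets of $G$, and the module decomposes as a direct sum over the orbits of this action.

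First I would analyse the stabiliser $\mathrm{Stab}(T)=\{g\in G: gT=T\}$ of an $i$-subset $T$. Since $gt=t$ forces $g=1$, the group $\mathrm{Stab}(T)$ acts freely on $T$ by left translation, so its order divides $i$; conversely any subgroup $H\le G$ with $|H|$ dividing $i$ stabilises some $i$-subset, for instance a union of $i/|H|$ right cosets of $H$. The direction I actually need is that a \emph{free} orbit contributes a free rank-one $\Z[G]$-summand: if $\mathrm{Stab}(T)=1$ the orbit is indexed by $G$, and setting $w_g:=g\cdot v_T$, where $v_T$ is the chosen wedge for $T$, yields a $\Z$-basis $\{w_g\}_{g\in G}$ of the corresponding summand on which $G$ acts simply by $h\cdot w_g=w_{hg}$. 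This shows the sign twist is trivialised on a free orbit, so the summand is genuinely free over $\Z[G]$. Consequently, if every $i$-subset has trivial stabiliser then $\La^i(\Z[G])$ is free, hence projective; by Cauchy's theorem this happens exactly when $G$ has no element of prime order $q$ with $q\mid i$. (The converse implication is also true, via cohomological triviality and Mackey's formula, but it is not needed here.)

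It then remains to feed this criterion into Lemma~\ref{ttvkap}\,ii) with $k=p-1$, noting that $p\ge 3$ forces $k\ge 2$ as that lemma requires. For each $i$ in the range $2\le i\le p-1$, every prime divisor $q$ of $i$ satisfies $q\le i\le p-1<p$, so by hypothesis $G$ has no element of order $q$; the criterion therefore gives that $\La^i(\Z[G])$ is projective throughout this range. Lemma~\ref{ttvkap}\,ii) then applies and shows that $\varkappa_j$ is an isomorphism for all $0\le j<p-1$. Taking $p=3$, so that the hypothesis reads ``$G$ has no element of order two'', recovers the first assertion that $\varkappa_1$ is an isomorphism. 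The one genuinely substantive step is the projectivity criterion, and within it the main obstacle is the correct bookkeeping of signs: one must verify that the twist introduced by reordering wedges is trivialised on each free orbit, so that free orbits yield free — rather than merely projective-looking — $\Z[G]$-summands.
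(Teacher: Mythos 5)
Your proposal is correct and takes essentially the same route as the paper: the paper's proof of Corollary \ref{43} simply combines Lemma \ref{ttvkap}\,ii) with the assertion that the $G$-modules $\La^i(\Z[G])$, $2\le i\le p-1$, are free, citing for this the proof of Theorem 4.2 of \cite{jalg}, where freeness is established by exactly your argument (orbit decomposition of the signed permutation module, the observation that an element stabilising a wedge of length $i$ has order dividing $i$, and the fact that free orbits give free summands). Your write-up merely makes that cited step self-contained, including the rescaling $w_g:=g\cdot v_T$ that trivialises the signs on a free orbit, a point the cited proof glosses over.
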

\begin{proof} In this case all $G$-modules $\La^i(\Z[G])$, $2\leq i\leq p-1$, are free, see the proof \cite[Theorem 4.2] {jalg}.   
\end{proof}

\section{A Poincar\'e duality}\label{spd}

In this section we will still  assume that $G$ is a finite group of order $n$.
We would like  to  apply  the results of Section \ref{pdsets} to group cohomology. Observe that in general the Hodge star operator $\al_k$ is not compatible with the action of $G$. To avoid this difficulty  we will twist the action.

For an element $g\in G$ denote by $\ell_g:G\to G$ the map given by $\ell_g(h)=gh$, $g,h\in G$. Since $\ell_g$ is a bijection, we can consider the sign of this permutation, which is denoted by $\ee(g)$. In this way we obtain a group homomorphism
$$\ee:G\to\{\pm 1\}$$

Following \cite{zarelua}, $G$ is called \emph{oriented} if $\ee$ is the trivial homomorphism. It is clear that any group of odd order or any perfect group is oriented, because there are no nontrivial homomorphisms into $\{\pm 1\}$. 

 Next, we choose a generator $\oo$ of $\La^n(\Z[G])$. We can take $\oo=1\wedge x_2\wedge \cdots \wedge x_n$, where $<$ is a chosen order on $G$:
$$G=\{1=x_1<x_2<\cdots <x_n\}.$$ 
Then for any $g\in G$ we have 
\begin{equation}\label{osign}
g\oo =\ee(g)\oo.
\end{equation}

Comparing the definitions, we see that $G$ is oriented iff the action of $G$ on $\La^n(\Z[G])$  is trivial, in other words if for any $1\leq i\leq n$ one has
$$1 \wedge x_2\wedge \cdots \wedge x_n=
x_i \wedge x_ix_2\wedge \cdots \wedge x_ix_n.$$

Now we introduce the twisting operation on $G$-modules.

If $A$ is a $G$-module, then $A\tw$ is the $G$-module which is $A$ as an abelian group, while the new action is given by
$$g\ast a=\ee(g)(ga).$$
For oriented groups $A\tw=A$. In general, we have the following isomorphism of $G$-modules
$$A\tw \cong A\otimes \Z\tw$$
and the map $\alpha_0$ from Section \ref{pdsets} induces the isomorphism of $G$-modules
 \begin{equation}\label{lanee}
 \Z\tw\xto{\al_0}\La^n(\Z[G]), \quad 1\mapsto \oo.
\end{equation}
Our next goal is to extend the isomorphism (\ref{lanee}) to other exterior powers. We claim that for any $x\in G$ one has
$$\al_k(gx_I) =\ee(g)g\al_k(x_I).$$
In fact, we have
$$(gx_I)\wedge \al_k(gx_I)=\oo=\ee(g)(\ee(g)\oo)=\ee(g)
(gx_I\wedge g\al_k(x_I))$$
and the claim follows. Here we used the identity (\ref{osign}). Thus, we have shown the following.

\begin{Le}\label{312511}
	For all $k\ge 0$ the map $\al_k$ induces an  isomorphism of $G$-modules
	$$\al_k:(\La^k(\Z[G]))\tw \to \La^{n-k}(\Z[G])$$
and hence we have an isomorphism of cochain complexes of $G$-modules:
$$\xymatrix{ 0 \ar[r] & \La^n(\Z[G])\ar[r]^{\partial_{n-1}}& \La^{n-1}(\Z[G])\ar[r] & \cdots \ar[r]  &\Z[G]\ar[r]^{\partial_0}& \Z\ar[r] &0 
		\\
0\ar[r] & \Z\tw\ar[r]^{\delta_{\mathcal N}^0} \ar[u]^{\al_0}  & \Z[G]\tw \ar[r]  \ar[u]^{\al_1} &\cdots \ar[r]& \La^{n-1}(\Z[G])\tw \ar[r]^{\delta_{\mathcal N}^{n-1}} \ar[u]^{\al_{n-1}} &
\La^n(\Z[G])\tw\ar[r]\ar[u]^{\al_n}&  0.}
$$
\end{Le}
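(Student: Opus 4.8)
The plan is to reduce the entire statement to the identity $\al_k(gx_I)=\ee(g)\,g\,\al_k(x_I)$ established immediately above, together with the facts already recorded in Proposition \ref{hodgestar}. First I would check that each $\al_k$ is a homomorphism of $G$-modules from the twisted source to the untwisted target. By definition the twisted action is $g\ast x=\ee(g)(gx)$, so by linearity and the displayed identity,
$$\al_k(g\ast x)=\ee(g)\,\al_k(gx)=\ee(g)^2\,g\,\al_k(x)=g\,\al_k(x),$$
where the final equality uses $\ee(g)=\pm 1$. Thus the two sign factors cancel and $\al_k$ intertwines the twisted action on $(\La^k(\Z[G]))\tw$ with the ordinary action on $\La^{n-k}(\Z[G])$.

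Next I would observe that $\al_k$ is already an isomorphism of abelian groups: by the explicit formula $\al_k(t_I)=(-1)^{\psi(I)}t_{I\co}$ it sends the basis $\{t_I:|I|=k\}$ bijectively, up to sign, onto the basis $\{t_{I\co}\}$ of $\La^{n-k}(\Z[G])$. Combined with the previous paragraph, this makes $\al_k$ an isomorphism of $G$-modules, which is the first assertion. For the diagram I would then note that the bottom row $C^*(G)\tw$ is genuinely a cochain complex of $G$-modules: the maps $\delta_{\mathcal N}^k$ are $G$-homomorphisms for the untwisted action (the lemma proven just before the present statement), and twisting source and target by the same character $\ee$ preserves this, since
$$\delta_{\mathcal N}^k(g\ast x)=\ee(g)\,\delta_{\mathcal N}^k(gx)=\ee(g)\,g\,\delta_{\mathcal N}^k(x)=g\ast\delta_{\mathcal N}^k(x).$$

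Finally, the commutativity of each square, $\partial_{n-k-1}\al_k=\al_{k+1}\delta_{\mathcal N}^k$, is precisely the content of Proposition \ref{hodgestar}; it holds at the level of abelian groups and is untouched by the twist, since twisting changes only the action and not the underlying maps. Assembling these, each vertical $\al_k$ is a $G$-module isomorphism and every square commutes, so the collection $(\al_k)$ constitutes an isomorphism of cochain complexes of $G$-modules. I do not expect a genuine obstacle here: the only step beyond Proposition \ref{hodgestar} is the cancellation $\ee(g)^2=1$ in the first display, and everything else is either cited from that proposition or follows from the observation that a scalar twist by a character transports $G$-linearity intact.
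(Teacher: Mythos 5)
Your proposal is correct and takes essentially the same route as the paper: the paper's entire proof consists of establishing the identity $\al_k(gx_I)=\ee(g)g\al_k(x_I)$ in the paragraph immediately preceding the lemma and then invoking, exactly as you do, the cancellation $\ee(g)^2=1$ for $G$-equivariance, the formula $\al_k(t_I)=(-1)^{\psi(I)}t_{I\co}$ for bijectivity, and Proposition \ref{hodgestar} for the commutativity of the squares. Your explicit verification that the twisted bottom row is still a complex of $G$-modules is a detail the paper leaves implicit, but it is the same argument.
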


After  applying the functor  $\hom_G(-,M)$ we see that  the cochain complex $K^*_\la(G,M)$ is isomorphic to the complex 
$$0\leftarrow \hom_G(\Z\tw,M)\leftarrow \hom_G(\Z[G]\tw,M)\leftarrow \cdots \leftarrow \hom_G(\La^{n-1}\Z[G]\tw,M).$$
Since the assignment $A\mapsto A\tw$ is an autoequivalence of the category of $G$-modules, we have $\hom_G(X\tw,Y)=\hom_G(X,Y\tw)$. Hence   $K^*_\la(G,M)$ is isomorphic to truncated $K_*^\varkappa(G,M)$, which is obtained by  deleting  the last group $\hom_G(\La^n\Z[G],M\tw)$. Thus we obtain the following result.

\begin{Pro}\label{pd} For any finite group $G$ and any $G$-module $M$ one has a Poincar\'e type duality:
$$\Delta_k:H^{n-k-1}_\la(G,M)\xto{\cong } H_k^\varkappa (G,M\tw),$$
for all  $0\leq k\leq  n-2$.
\end{Pro}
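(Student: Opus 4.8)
The plan is to obtain the duality by applying $\hom_G(-,M)$ to the isomorphism of cochain complexes of $G$-modules from Lemma \ref{312511} and then reading off (co)homology; the only real work is bookkeeping of degrees together with a single truncation.

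First I would apply the additive functor $\hom_G(-,M)$ to the diagram of Lemma \ref{312511}. Because each Hodge star $\al_k\colon(\La^k(\Z[G]))\tw\to\La^{n-k}(\Z[G])$ is an isomorphism of $G$-modules commuting with the differentials $\partial_\bullet$ and $\delta_{\mathcal N}^\bullet$, this yields an isomorphism of the resulting (co)chain complexes. Invoking that $(-)\tw$ is an autoequivalence, so $\hom_G(X\tw,M)=\hom_G(X,M\tw)$, the dualized bottom row becomes the complex with terms $\hom_G(\La^\bullet(\Z[G]),M\tw)$. Now $K^*_\la(G,M)$ is by definition $\hom_G({\sf Kos}^+_*(G),M)$, and ${\sf Kos}^+_*(G)$ is precisely the top row of Lemma \ref{312511} with its terminal $\Z$ dropped; under $\al$ this terminal $\Z=\La^0(\Z[G])$ corresponds to $\La^n(\Z[G])\tw$ via $\al_n$. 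Hence $K^*_\la(G,M)$ is identified with $K_*^\varkappa(G,M\tw)$ with its top group $\hom_G(\La^n(\Z[G]),M\tw)$ deleted.

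Next I would match the gradings. In ${\sf Kos}^+_*(G)$ the term $\La^{j+1}(\Z[G])$ sits in homological degree $j$, so $K^j_\la(G,M)=\hom_G(\La^{j+1}(\Z[G]),M)$; under $\al_{n-j-1}^*$ this is carried to $\hom_G(\La^{n-j-1}(\Z[G]),M\tw)=K^\varkappa_{n-j-1}(G,M\tw)$. Thus the isomorphism reverses the grading by $j\mapsto n-j-1$ and gives
$$H^j_\la(G,M)\cong H_{n-j-1}\bigl(\text{truncated }K^\varkappa_*(G,M\tw)\bigr).$$
Putting $j=n-k-1$ turns the right-hand side into the homology in degree $k$ of the truncated complex, which is the candidate for $\Delta_k$.

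The one genuine point, and the source of the range $0\le k\le n-2$, is the truncation: the complex isomorphic to $K^*_\la(G,M)$ lacks the top group $K^\varkappa_n(G,M\tw)=\hom_G(\La^n(\Z[G]),M\tw)$. Deleting a top term of a chain complex can alter homology only in the top two degrees $n$ and $n-1$, whereas for $k\le n-2$ both boundary maps $K^\varkappa_{k+1}\to K^\varkappa_k\to K^\varkappa_{k-1}$ survive intact, so there $H_k$ of the truncated complex coincides with $H_k^\varkappa(G,M\tw)$. This gives $\Delta_k\colon H^{n-k-1}_\la(G,M)\xto{\cong}H_k^\varkappa(G,M\tw)$ for $0\le k\le n-2$. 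I expect the only thing needing care is exactly this endpoint: at $k=n-1$ (that is $j=0$, where $H^0_\la(G,M)\cong M^G$) the deleted image $\im\bigl(K^\varkappa_n\to K^\varkappa_{n-1}\bigr)$ is generally nonzero, so the duality in this exact form cannot be pushed to $k=n-1$.
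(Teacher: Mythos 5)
Your proposal is correct and follows essentially the same route as the paper: apply $\hom_G(-,M)$ to the Hodge-star isomorphism of Lemma \ref{312511}, use the autoequivalence identity $\hom_G(X\tw,Y)=\hom_G(X,Y\tw)$ to identify $K^*_\la(G,M)$ with $K_*^\varkappa(G,M\tw)$ minus its top term $\hom_G(\La^n\Z[G],M\tw)$, and observe that this truncation leaves homology untouched in degrees $k\leq n-2$. Your explicit degree bookkeeping and the remark about failure at $k=n-1$ only spell out what the paper leaves implicit.
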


In particular, by Lemma \ref{ttvkap} and  Corollary \ref{43} we have the following facts.

\begin{Co} Let  $G$ be a finite group of order $n$ and $M$ be a $G$-module. 
	
	\begin{enumerate} 
		\item [i)] We have an isomorphism
		$$\Delta_0:H^{n-1}_\la(G,M)\xto{\cong } \hat{H}^0 (G,M\tw).$$
		\item [ii)] If $p$ is a prime such that the group has no elements of order $q$, for any prime $q$ such that $q<p$, then one has an isomorphism 
		$$\Delta_i:H^{n-i-1}_\la(G,M)\xto{\cong } \hat{H}^i (G,M\tw), \, 0\leq i< p-1.$$
	\end{enumerate}
\end{Co}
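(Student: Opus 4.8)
The plan is to read off both isomorphisms by composing the Poincar\'e duality of Proposition \ref{pd} with the comparison maps $\varkappa_*$, in the range where those comparison maps are already known to be isomorphisms. Nothing new needs to be computed: the corollary is a formal consequence of Proposition \ref{pd}, Lemma \ref{ttvkap}, and Corollary \ref{43}, each applied with coefficients in the twisted module $M\tw$. Throughout I use that Proposition \ref{pd} produces an isomorphism $\Delta_i : H^{n-i-1}_\la(G,M) \xto{\cong} H_i^\varkappa(G,M\tw)$ exactly in the range $0\le i\le n-2$.

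For part i) I would take the instance $i=0$ of Proposition \ref{pd} (legitimate as soon as $n\ge 2$), giving an isomorphism $H^{n-1}_\la(G,M)\xto{\cong} H_0^\varkappa(G,M\tw)$, and combine it with Lemma \ref{ttvkap} i) applied to $M\tw$, which states that $\varkappa_0 : \hat H^0(G,M\tw)\to H_0^\varkappa(G,M\tw)$ is an isomorphism. Composing the former with $\varkappa_0^{-1}$ then produces the isomorphism $\Delta_0 : H^{n-1}_\la(G,M)\xto{\cong}\hat H^0(G,M\tw)$ asserted in the corollary.

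For part ii), the case $p=2$ has vacuous hypothesis and its asserted range reduces to the single index $i=0$, so it coincides with part i). For odd $p$, Corollary \ref{43} provides isomorphisms $\varkappa_i : \hat H^{-i}(G,M\tw)\xto{\cong} H_i^\varkappa(G,M\tw)$ for all $i<p-1$, while Proposition \ref{pd} gives $H^{n-i-1}_\la(G,M)\xto{\cong} H_i^\varkappa(G,M\tw)$; composing the latter with $\varkappa_i^{-1}$ yields the desired isomorphism $\Delta_i$ for each $i$ in the stated range. The one point requiring care is that Proposition \ref{pd} actually applies at every such $i$, i.e. that $i\le n-2$ holds whenever $0\le i<p-1$; this amounts to the inequality $p\le n$.

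I expect this last range check to be the only genuine step, and it is where I would focus attention: it follows from Cauchy's theorem. Since $G$ has no element of order $q$ for any prime $q<p$, no prime smaller than $p$ can divide $n=|G|$, so every prime factor of $n$ is at least $p$; consequently $n\ge p$ for nontrivial $G$, which gives $p-2\le n-2$ as required. (The trivial group is the only degenerate case, where Proposition \ref{pd} has empty range.) Beyond this bookkeeping I anticipate no obstacle, since all the substantive work already resides in Proposition \ref{pd}, Lemma \ref{ttvkap}, and Corollary \ref{43}.
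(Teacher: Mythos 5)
Your proposal is correct and is essentially the paper's own proof: the paper obtains the corollary exactly by composing the duality $\Delta_i$ of Proposition \ref{pd} with the inverses of the isomorphisms $\varkappa_i$ supplied by Lemma \ref{ttvkap}~i) and Corollary \ref{43}, applied with coefficients $M\tw$ (note that the target should be read as the Tate group $\hat{H}^{-i}(G,M\tw)$). Your added range check via Cauchy's theorem (every prime factor of $n$ is at least $p$, hence $p-2\le n-2$) and your exclusion of the trivial group are points the paper passes over silently, but they only make the same argument more careful, not different.
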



\section{Decompositions of $\La^2(\Z[G])$ and applications}
In this section we decompose the $G$-module $\La^2(\Z[G])$ as a direct sum of  submodules.  Denote by $\bar{G}$ the set $G\setminus\{1\}$. Clearly the elements of the form $1\wedge x$, $x\in \bar{G}$, generate $\La^2(\Z[G])$ as a $G$-module, because $a\wedge b=a(1\wedge c)$, where $c=a\1b$. Some of them will be $G$-free generators, meaning that the corresponding cyclic submodule is free, but not all of them, if $G$ has $2$-torsion.  In fact, if $o(x)=2$, then
$$1\wedge x+x(1\wedge x)=0.$$
Here $o(x)$ denotes the order of $x\in G$. Observe also that the relation $1\wedge x+x(1\wedge x\1)=0$ shows that  $1\wedge x$ and $1\wedge x\1$ define the same submodules.  

This observation suggests that we introduce further notations. The map $x\mapsto x\1$ defines an action of the cyclic group of order two on the set $\bar{G}$. Fixed elements under this action are exactly such $x\in G$ for which $o(x)=2$. The complement  subset $\{x\in G| x^2\not =1\}$ is a  $C_2$-subset of $\bar{G}$ on which the action is free.  We denote by ${\mathfrak O}_2(G)$
the corresponding set of orbits. For any orbit  $\xi$   we choose a representative element in $\xi$, which is denoted by $\ss(\xi)$. Thus  $\xi=\{\ss(\xi), \ss(\xi)\1 \}$.

For any $x\in \bar{G}$, define a $G$-module homomorphism $f_x:\Z[G]\to \La^2(\Z[G])$ by 
$$f_x(g)=g\wedge gx, \, g\in G.$$
Observe that for $o(x)=2$, we have 
$$f_x(1+x)=1\wedge x+x\wedge 1=0.$$
Thus, for such $x$ the map $f_x$ factors through
$$\hat{f}_x:\Z[G]/\Z[G](x+1)\to \Lambda^2(\Z[G]).$$
Now we set
$$W_1=\bigoplus_{o(x)=2}\Z[G]/\Z[G](x+1) $$
and
$$W_2=\bigoplus_{\xi\in {\mathfrak O}_2(G)} \Z[G],$$
where $x$ (resp. $\xi$) is running through the set of elements of order two (resp. ${\mathfrak O}_2(G)$).

It would be convenient to write a general element of $W_1$ (resp. $W_2$) as a sum $\sum_{o(x)=2}j_x(u)$ (resp. $\sum_{\xi}i_\xi(v)$ ). Here $u\in \Z[G]/\Z[G](x+1)$, $v\in \Z[G]$ and $j_x:\Z[G]/\Z[G](x+1)\to W_1$ and $i_\xi:\Z[G]\to W_2$ are standard inclusions into the direct sum.

Define the $G$-module homomorphisms
$$f_1:W_1\to \Lambda^2(\Z[G]) \quad {\rm and } \quad f_2:W_2\to \Lambda^2(\Z[G])$$
by
$$f_1\left (\sum_{o(x)=2}j_x(u)\right )=\sum_x \hat{f}_{x}(u) \quad 
{\rm and} \quad f_2\left (\sum_{\xi}i_\xi(v)\right )=\sum_\xi f_{\ss(\xi)}(v).$$
The maps $f_1$ and $f_2$ define the $G$-module homomorphism
$$f:W_{1} \oplus W_2 \to \Lambda^2(\Z[G]),$$
which is $f_1$ on $W_1$ and $f_2$ on $W_2$.
\begin{Le} \label{6112}
	The map $f:W_{1} \oplus W_2 \to \Lambda^2(\Z[G])$ is an isomorphism of $G$-modules.
\end{Le}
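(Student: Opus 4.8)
The plan is to exhibit an explicit inverse to $f$, or equivalently to verify that $f$ is both surjective and injective by comparing free bases. The cleanest route uses the fact that $\La^2(\Z[G])$ is a free abelian group with basis $\{x \wedge y : x < y\}$, and that the decomposition of $\bar G$ into $2$-torsion elements and free $C_2$-orbits gives a combinatorial bookkeeping of exactly these basis elements. Since $f$ is a map of $G$-modules and both sides are finitely generated over $\Z$, it suffices to check that $f$ carries a $\Z$-basis of $W_1 \oplus W_2$ bijectively onto a $\Z$-basis of $\La^2(\Z[G])$, up to a unimodular change.

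First I would produce $\Z$-bases of the three modules. For $W_2 = \bigoplus_\xi \Z[G]$, a basis is given by the elements $i_\xi(g)$ for $\xi \in {\mathfrak O}_2(G)$ and $g \in G$, and $f_2$ sends $i_\xi(g) \mapsto g \wedge g\ss(\xi)$. For $W_1 = \bigoplus_{o(x)=2} \Z[G]/\Z[G](x+1)$, note that $\Z[G]/\Z[G](x+1)$ is free abelian with basis given by one representative from each pair $\{g, gx\}$ (since $gx \equiv -g$ in the quotient); thus a basis of $W_1$ consists of $j_x(g)$ where, for each order-two $x$, $g$ runs over a transversal of the right-$\langle x\rangle$-cosets, and $\hat f_x(g) = g \wedge gx$. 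Second I would check that the resulting family $\{g \wedge g x\}$ (over both kinds of generators) hits every unordered pair $\{a, b\}$ with $a \neq b$ in $G$ exactly once. This is the combinatorial heart: writing $c = a^{-1}b \in \bar G$, the pair $a \wedge b = a(1 \wedge c)$ is indexed by the element $c$ and the left factor $a$; the two generators $c$ and $c^{-1}$ of the same submodule, together with the left-factor ambiguity $a \leftrightarrow ac$ in the $2$-torsion case, are precisely accounted for by passing to orbits ${\mathfrak O}_2(G)$ for $c^2 \neq 1$ and to cosets for $o(c) = 2$.

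Concretely, for the free-orbit part each unordered pair with $a^{-1}b = c$, $c^2 \neq 1$, is recorded once as $f_{\ss(\xi)}$ applied to the appropriate left factor (choosing $\ss(\xi) \in \{c, c^{-1}\}$ fixes the representative, and the $g \wedge g\ss(\xi)$ runs without relations over all of $\Z[G]$). For the $2$-torsion part, when $o(c) = 2$ the pair $\{a, ac\}$ equals $\{ac, (ac)c\}$, so the two left factors $a$ and $ac$ name the same basis vector up to sign; the relation $f_x(1+x) = 0$ imposed in defining $W_1$ via the quotient $\Z[G]/\Z[G](x+1)$ is exactly this identification, so $\hat f_x$ is well-defined and injective on the coset basis. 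Assembling these, the map $f$ sends the chosen $\Z$-basis of $W_1 \oplus W_2$ bijectively (with signs $\pm 1$) onto the standard $\Z$-basis $\{x \wedge y\}$ of $\La^2(\Z[G])$, hence $f$ is an isomorphism of abelian groups, and being a $G$-module map it is an isomorphism of $G$-modules.

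The main obstacle, and the step to write out with care, is the clean statement and proof that every unordered pair is counted exactly once with no overlaps between the $W_1$ and $W_2$ contributions and no double-counting within each. The potential pitfall is the overlap in indexing: one must verify that the left-factor parametrization by all of $G$ in $W_2$ does not produce redundancies (it does not, precisely because for $c^2 \neq 1$ the pair $\{a, ac\}$ determines $a$ once $\ss(\xi)$ is fixed), while in $W_1$ the quotient is exactly what removes the genuine redundancy $a \leftrightarrow ac$. I expect the surjectivity and the rank count to be immediate once the bijection of bases is set up, so the entire proof reduces to this orbit-counting lemma for $\bar G$ under inversion.
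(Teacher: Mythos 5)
Your proposal is correct and follows essentially the same route as the paper: both arguments exhibit $\Z$-bases of $W_1$ (coset representatives for each order-two $x$) and $W_2$ (group elements for each orbit $\xi$), and check that $f$ carries these bijectively, up to sign, onto the standard basis $\{a\wedge b\}$ of $\La^2(\Z[G])$, split according to whether $o(a^{-1}b)=2$ or not. The paper merely packages your ``no overlap, no double-counting'' bookkeeping as an explicit decomposition of the target into the two corresponding spans.
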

\begin{proof} By construction, $f$ is a $G$-module homomorphism. Hence we need to show that $f$ is an isomorphism of free abelian groups. To this end, also decompose the RHS as a direct sum of two summands $U\oplus V$, where $U$ (resp. V) is spanned as an abelian group by $a\wedge b$, $a,b\in G$ such that $o(a\1b)\not =2$ (resp.  $o(a\1b) =2$). We have  $f(W_1)\subset U$ and  $f(W_2)\subset V$. 
	
	Take $\xi\in {\mathfrak O}_2(G)$. Denote by $\tau_\xi$ the inclusion of $\Z[G]$ in $W_2$ corresponding to the summand indexed by $\xi$. The collection $\tau_\xi(g)$ form a basis of $W_2$, where $g\in G$. By definition $f$ sends this element to $g\wedge g\ss(\xi)$, which up to sign is a free generator of $V$ (because $o(g\1g\ss(\xi))\not =2$). Conversely, take any basis element $a\wedge b$ of $V$, where $a,b\in G$ and $a<b$ in a chosen order of $G$. We have $a\wedge b=a(1\wedge c)$, where $c=a\1b$. By definition of $V$ we have  $o(c)\not =2$. Denote by $\xi$ the class of $c$ in ${\mathfrak O}_2(G)$. Then $c=\sigma(\xi)$ or $c\1 =\sigma(\xi)$. In the first case $f_{\sigma(\xi)}(a)=a\wedge b$ and in the second case  $f_{\sigma(\xi)}(b)=-a\wedge b$. This shows that the matrix corresponding to $f$ in the chosen bases is a diagonal matrix with $\pm 1$ on the diagonal. Hence  $f$ induces an isomorphism $W_2\to V$.

	Let $x\in G$ be an element of order two. We set $C_{2}x=\{1,x\}$, the cyclic subgroup generated by $x$.
	First, we choose a section $\tau_x:G/C_2(x)\to G$ of the canonical map $G\to G/C_2(x)$. Then the  collection $\tau_x(\eta)$ form a basis of the abelian group $\Z[G]/\Z[G](x+1)$, where $\eta$ runs trough the set $G/C_2(x)$. Hence, the elements $\tau_{x}(\eta)$ form a basis of $W_1$, where $o(x)=2$.
	By definition the map $f$ sends a basis element $\tau_{x}(\eta)$  to $\tau(\eta)\wedge \tau(\eta)x$, which is a basis element (up to sign) of $U$. Conversely, take any basis element $a\wedge b$ of $U$, where $a,b\in G$ and $a<b$ in a chosen order of $G$. We have $a\wedge b=a(1\wedge c)$, where $c=a\1b$ and  $o(c)=2$.   Denote by $\bar{a}$ the class of $a$ in $G/C_2(c)$. Then $\tau_c(\bar{a})$ is either $a$ or $ac$ and we have 
	$f_c(\tau_c(\bar{a}))=a\wedge ac=a\wedge b$ or $f_c(\tau_c(\bar{a}))=ac\wedge acc=-c\wedge ac=-a\wedge b$ and we see that $f$ is a bijection on basis elements. Thus $f$ induces an isomorphism $W_1\to U$ and the Lemma follows.
\end{proof}

\begin{Le} Define the $G$-module homomorphisms $\theta_1:\Z[G]\to W_1$ and $\theta_2:\Z[G]\to W_2$ by 
$$\theta_1(g)=\sum_{o(x)=2}j_x(\bar{g}),$$
$$\theta_2(g)=\sum_\xi i_\xi(g-g\ss(\xi)\1).$$
Here $\bar{g}$ is the image of $g\in G$ in $\Z[G]/\Z[G](x+1)$. 	 
Then one has the following commutative diagram
$$\xymatrix{\Z[G]\ar[r] ^{\delta^1_{\mathcal N}} & \La^2(\Z[G])\\
	\Z[G]\ar[r]^{\theta} \ar[u]^{id}&  W_1 \oplus W_2\ar[u]^{f}
}$$
where  the components of $\theta$ are $\theta_1$ and $ \theta_2$.
\end{Le}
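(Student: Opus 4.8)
The plan is to verify the single identity $f\circ\theta=\delta^1_{\mathcal N}$ by a direct computation. All four maps $f$, $\theta_1$, $\theta_2$ and $\delta^1_{\mathcal N}$ are $G$-module homomorphisms (equivariance of $\theta_1,\theta_2$ is immediate from their defining formulas, and that of $\delta^1_{\mathcal N}$ was established above), so since $\Z[G]$ is generated as a $G$-module by $1$ it suffices to check the equality on the basis elements $g\in G$. First I would unwind the top composite: $\delta^1_{\mathcal N}(g)=-{\mathcal N}\wedge g=\sum_{h\in G}g\wedge h$, and reindexing by $h=gx$ this becomes $\sum_{x\in G}g\wedge gx=\sum_{x\in G}f_x(g)$. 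The term $x=1$ vanishes since $g\wedge g=0$, so
$$\delta^1_{\mathcal N}(g)=\sum_{o(x)=2}f_x(g)+\sum_{\xi\in{\mathfrak O}_2(G)}\bigl(f_{\ss(\xi)}(g)+f_{\ss(\xi)\1}(g)\bigr),$$
where I have split the sum over $\bar G$ according to the decomposition into fixed points of $x\mapsto x\1$ (the elements of order two) and the free orbits indexed by ${\mathfrak O}_2(G)$.

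Next I would match the two summands with $f_1\circ\theta_1$ and $f_2\circ\theta_2$. For $W_1$ this is immediate: $f_1(\theta_1(g))=\sum_{o(x)=2}\hat f_x(\bar g)=\sum_{o(x)=2}f_x(g)$. For $W_2$ I would compute $f_2(\theta_2(g))=\sum_\xi f_{\ss(\xi)}\bigl(g-g\ss(\xi)\1\bigr)=\sum_\xi\bigl(f_{\ss(\xi)}(g)-f_{\ss(\xi)}(g\ss(\xi)\1)\bigr)$, and then invoke antisymmetry of the wedge: since $f_{\ss(\xi)}(g\ss(\xi)\1)=g\ss(\xi)\1\wedge g$, we get $-f_{\ss(\xi)}(g\ss(\xi)\1)=g\wedge g\ss(\xi)\1=f_{\ss(\xi)\1}(g)$. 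Hence $f_2(\theta_2(g))=\sum_\xi\bigl(f_{\ss(\xi)}(g)+f_{\ss(\xi)\1}(g)\bigr)$. Adding the $W_1$ and $W_2$ contributions reproduces $\delta^1_{\mathcal N}(g)$ exactly, which finishes the proof.

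The only step needing genuine care is the $W_2$ matching, and the conceptual point there is why $\theta_2$ is defined using $g-g\ss(\xi)\1$ rather than $g$ alone. Each orbit $\xi$ must contribute both $f_{\ss(\xi)}(g)$ and $f_{\ss(\xi)\1}(g)$ to $\delta^1_{\mathcal N}$, but the chosen representative $\ss(\xi)$ only produces the first term directly; the second generator $g\ss(\xi)\1$ is engineered precisely so that, after applying wedge antisymmetry, it recovers the $f_{\ss(\xi)\1}$-term. Everything else is routine bookkeeping of the orbit decomposition of $\bar G$, and I expect no further obstacles.
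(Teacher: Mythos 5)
Your proof is correct and follows essentially the same route as the paper: expand $\delta^1_{\mathcal N}$ on a group element, split the sum over $\bar G$ into the elements of order two and the free orbits of $x\mapsto x^{-1}$, and use antisymmetry of the wedge to identify $-f_{\sigma(\xi)}(g\sigma(\xi)^{-1})$ with $f_{\sigma(\xi)^{-1}}(g)$. The only difference is cosmetic: the paper invokes $G$-equivariance to check the identity at $1\in\Z[G]$ alone, whereas you carry an arbitrary basis element $g$ through the identical computation.
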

\begin{proof} Since the morphisms involved are $G$ homomorphisms, we only need to control the image of $1\in \Z[G]$. We have
$$\delta^1_{\mathcal N}(1)=-{\mathcal N}\wedge 1=\sum_{x\not =1} 1\wedge x= \sum_{o(x) =2} 1\wedge x+\sum_{x^2\not =1} 1\wedge x.$$
The second summand can be rewritten as follows. If $x^2\not=1$, then $x$ and $x\1$ are two representatives of a class in ${\mathfrak O}_2(G)$. Hence 
$$\delta^1_{\mathcal N}(1)=\sum_{o(x) =2} 1\wedge x+\sum_\xi \left (1\wedge \ss(\xi) +1\wedge \ss(\xi)\1\right).$$
On the other hand, $f\circ\theta=f_1\theta_1+f_2\theta_2$.
It follows that

\begin{align*}
f(\theta(1))& =f_1\left (\sum_{o(x)=2} j_x(\bar{1})\right )+ f_2\left ( \sum_\xi f_{\ss(\xi)} (1-\ss(\xi)\1)\right )\\ &=  \sum_{o(x)=2} \hat{f}_x(1)+\sum_\xi f_{\ss(\xi)}(1-\ss(\xi)\1)\\ &=\sum_{o(x)=2}1\wedge x+\sum_\xi \left( 1\wedge \ss(\xi)-\ss(\xi)\1\wedge \ss(\xi)\1\ss(\xi)\right)\\
&=\sum_{o(x)=2}1\wedge x+\sum_\xi \left( 1\wedge \ss(\xi)+1\wedge \ss(\xi)\1\right)
\end{align*}
and the result is proved.
\end{proof}

According to Lemma \ref{ttvkap}, we know that  $\varkappa_1:\hat{H}^{-1}(G,M)\to  H_1^\varkappa(G,M)$ is an isomorphism if $G$ has no elements of order two. Now we will consider arbitrary groups. To state the corresponding fact, we fix some notation. If $o(x)=q$ we denote by $C_q(x)$ the cyclic subgroup generated by $x$.  
\begin{Pro}\label{ox=2} Let $G$ be a finite group of order $n$ and $M$ be a $G$-module. Then one has the following exact sequence
	$$\bigoplus_{o(x)=2} \hat{H}^{-1}(C_2(x),M)\to \hat{H}^{-1}(G,M)\xto{\varkappa_1} H_1^\varkappa (G,M) \to 0.$$ 
\end{Pro}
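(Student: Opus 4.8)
The plan is to compute $H_1^\varkappa(G,M)$ directly from its chain-complex definition and to identify the cokernel-type description with $\hat H^{-1}(G,M)$ modulo the contributions coming from elements of order two, which is exactly what the preceding two lemmas were set up to supply. Recall that $K^\varkappa_1(G,M)=\hom_G(\La^2\Z[G],M)$ with the boundary $\partial^\varkappa_1:K^\varkappa_1\to K^\varkappa_0=\hom_G(\Z[G],M)=M$ dual to $\delta^1_{\mathcal N}$, and $\partial^\varkappa_0:M\to H^0(G,M)$ is induced by the norm. Thus $H_1^\varkappa(G,M)=\ker(\partial^\varkappa_0)/\im(\partial^\varkappa_1)$. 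Since $\ker(\partial^\varkappa_0)=\ker({\sf N})=\hat H^{-1}(G,M)$ (by the very definition of Tate cohomology in degree $-1$), the statement reduces to showing two things: first, that $\varkappa_1$ is the natural map $\ker({\sf N})\to \ker({\sf N})/\im(\partial^\varkappa_1)$, so that it is automatically surjective; and second, that $\im(\partial^\varkappa_1)$ is precisely the image of the sum of the maps coming from the subgroups $C_2(x)$.

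First I would pin down $\varkappa_1$ concretely. From the construction in Section 4, $\varkappa_1$ is obtained by lifting $\id_\Z$ along the resolutions $P_*\to\Z$ and $C_*^\vee\to\Z$ and then applying $\hom_G(-,M)$; comparing $K_1^\varkappa(G,M)=M$ in degree one with $\hat H^{-1}(G,M)=\ker{\sf N}\subset M$, the map $\varkappa_1$ is exactly the passage from the Tate representative in $\ker{\sf N}$ to its class in $H_1^\varkappa$. Surjectivity of $\varkappa_1$ is then immediate: every cycle in $K_1^\varkappa$ lies in $\ker(\partial^\varkappa_0)=\ker{\sf N}$, so the quotient map $\ker{\sf N}\to H_1^\varkappa$ is onto, and this quotient map is $\varkappa_1$. (One should double-check the identification $\ker(\partial_0^\varkappa)=\ker{\sf N}$ against the degree-$0$ computation already carried out in the proof of Lemma \ref{ttvkap}, where $K_1^\varkappa\to K_0^\varkappa$ was shown to be induced by the norm.)

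The substance is the kernel. I would compute $\im(\partial^\varkappa_1)$ using the isomorphism $f:W_1\oplus W_2\xto{\cong}\La^2(\Z[G])$ of Lemma \ref{6112} together with the factorization $\delta^1_{\mathcal N}=f\circ\theta$ of the following lemma, where $\theta=(\theta_1,\theta_2)$. Dualizing via $\hom_G(-,M)$ turns $\partial^\varkappa_1$ into the map $M=\hom_G(\Z[G],M)\xleftarrow{\ } \hom_G(W_1,M)\oplus\hom_G(W_2,M)$ induced by $\theta_1$ and $\theta_2$ (using $f$ to identify $\hom_G(\La^2\Z[G],M)$ with the latter direct sum). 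Using $\hom_G(\Z[G],M)=M$ and $\hom_G(\Z[G]/\Z[G](x+1),M)={}_{{\sf N}_{C_2(x)}}M=\ker({\sf N}_{C_2(x)})$, the $W_1$-summand contributes, for each $x$ of order two, precisely the subgroup of $M$ that, after composing with the inclusion into $\ker{\sf N}$, realizes the image of $\hat H^{-1}(C_2(x),M)\to\hat H^{-1}(G,M)$; one checks that $\theta_1$ restricted to $\Z[G]/\Z[G](x+1)$ is exactly the comparison map underlying the restriction-type homomorphism. The $W_2$-summand, by contrast, consists of free $G$-modules and (as in the free case governing Corollary \ref{43}) contributes nothing new to the kernel beyond what is already a relation in $\hat H^{-1}(G,M)$, so it does not enlarge the image modulo the image of the $W_1$-part. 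Assembling these identifications yields exactness at $\hat H^{-1}(G,M)$, namely $\im(\varkappa_1)=\hat H^{-1}(G,M)/\!\bigl(\text{image of }\bigoplus_{o(x)=2}\hat H^{-1}(C_2(x),M)\bigr)$.

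The main obstacle I anticipate is the precise matching of the $W_1$-contribution with the restriction maps $\hat H^{-1}(C_2(x),M)\to\hat H^{-1}(G,M)$: one must verify that the dual of $\theta_1$ on the $x$-summand agrees, under the identification $\hom_G(\Z[G]/\Z[G](x+1),M)\cong\hat H^{-1}(C_2(x),M)$, with the map induced by restriction, and that the $W_2$-part genuinely contributes nothing to the kernel after dualizing. Both points are bookkeeping with the explicit formulas $\theta_1(g)=\sum_{o(x)=2}j_x(\bar g)$ and $\theta_2(g)=\sum_\xi i_\xi(g-g\ss(\xi)^{-1})$, but getting the signs and the freeness argument for $W_2$ exactly right is where the care is needed.
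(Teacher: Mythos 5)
Your proposal is correct and takes essentially the same route as the paper's own proof: the paper likewise uses the decomposition $f:W_1\oplus W_2\xto{\cong}\La^2(\Z[G])$ together with the factorization $\delta^1_{\mathcal N}=f\circ\theta$ to present $\hat H^{-1}(G,M)=\ker({\sf N})/V$ and $H_1^\varkappa(G,M)=\ker({\sf N})/U$ with $U=U_1+U_2$, verifies $V\subset U$ (whence $\varkappa_1$ is the canonical surjection) and $U_2\subset V$, and identifies $\ker(\varkappa_1)=U/V$ with the image of $\bigoplus_{o(x)=2}\hat H^{-1}(C_2(x),M)$. The one place to tighten your write-up is the $W_2$-step: what makes it harmless is not freeness of $W_2$ per se (freeness alone says nothing about where the dualized map lands) but the explicit formula $\theta_2(g)=\sum_\xi i_\xi(g-g\ss(\xi)\1)$, which shows its contribution $\sum_\xi(1-\ss(\xi)\1)M$ lies inside the subgroup $V$ generated by the elements $gm-m$.
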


\begin{proof} It follows from the above lemmata that the complex $K^\varkappa_*(G,M)$ in dimensions $0,1,2$ looks as follows:
$$\xymatrix{H^0(G,M)&M\ar[l] & W^1(G,M)\oplus W^2(G,M)\ar[l]
},  $$ 
where $$W^1(G,M)=\hom_G(W_1,M)$$
is the set of functions $\ell _1$  defined on the set $x\in G$ of elements of order $2$ with values in $M$ satisfying the condition $(x+1)\ell_1(x)=0$, while 
$$W^2(G,M)=\hom_G(W_2,M)$$
is the set of functions $\ell _1$  defined on the set $\xi\in {\mathfrak O}_2(G)$ with values in $M$.
The $0$-th boundary map $d:M\to H^0(G,M)$ is the norm map $m\mapsto \sum_{x\in G} xm$, while the next boundary map $d$ is given by
$$d(\ell_1,\ell_2)=\sum_{o(x)=2} \ell_1(x)+ \sum_{\xi} (1-\ss(\xi)\1)\ell_2(\xi).$$
After these preliminaries, we will now prove Proposition \ref{ox=2}. By definition we have
$$\hat{H}^{-1}(G,M) = \frac{Ker({\sf N}:M\to H^0(G,M))}{V} \quad {\rm and} \quad H^\varkappa_{1}(G,M) = \frac{Ker({\sf N}:M\to H^0(G,M))}{U},$$
where $V$ is generated by elements of the form $gm-m$, $g\in G$, $m\in M$, while $U=U_1+U_2$. Here $U_1$ is generated by $n\in M$, where $xn+n=0$ for an element $x\in G$ of order two and $U_2$ is generated by elements of the form $gm-m$,  where $g=\ss(\xi)$ for an element $\xi\in {\mathfrak O}_2(G)$ and $m\in M$. Our claim is that $V\subset U$. In other words we have to show $gm-m\in U$ for all $g\in G$ and $m\in M$. By definition, this holds automatically if $g=\ss(\xi)\1$ for an element $\xi \in {\mathfrak O}_2(G)$. Since $gm-m=-(g\1n-n)$ for $n=gm$, we see that  $gm-m\in U$ for all $g$ with $o(g)\not =2$. Assume now $o(g)=2$. Then for $n=gm-m$ we have
$(g+1)n=0$ and hence $n\in U_1\subset U$. So we proved that $V\subset U$. It implies that the map $\varkappa_1$ is surjective. It is also clear that $Ker(\varkappa_1)=U/V$. Since $U=U_1+ U_2$ and $U_2\subset V$, we see that $Ker(\varkappa_1)=U_1/U_1\cap V$. On the other hand the image of $\bigoplus_{o(x)=2} \hat{H}^{-1}(C_2(x),M)$ in $\hat{H}^{-1}(G,M)$ is $U_1/V_1$, where
$V_1\subset M$ is a subgroup generated by elements of the form $xn-n$, where $o(x)=2$, $n\in M$. Obviously $V_1\subset U_1\cap V$ and Proposition \ref{ox=2} follows. 
\end{proof}

\begin{Co}\label{601} Let $G$ be a finite group of order $n$ and $M$ be a $G$-module. Then one has the following exact sequence
$$\bigoplus_{C_2\subset G} \hat{H}^{-1}(C_2,M\tw)\to \hat{H}^{-1}(G,M\tw)\to H^{n-2}_\la (G,M) \to 0,$$
where the sum is taken over all subgroups of order two.
\end{Co}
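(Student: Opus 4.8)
The plan is to deduce Corollary \ref{601} by specializing Proposition \ref{ox=2} to the twisted module $M\tw$ and then transporting its right-hand term across the Poincar\'e duality isomorphism $\Delta_1$ of Proposition \ref{pd}. Both ingredients are already in hand, so the argument is essentially the composition of two established results; the only points needing care are a cosmetic re-indexing of the direct sum and the range in which $\Delta_1$ is defined.

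First I would apply Proposition \ref{ox=2} verbatim, but with the $G$-module $M\tw$ in place of $M$. This is legitimate, since that proposition holds for an arbitrary coefficient module and $M\tw$ is again a $G$-module. It produces the exact sequence
$$\bigoplus_{o(x)=2} \hat{H}^{-1}(C_2(x),M\tw)\to \hat{H}^{-1}(G,M\tw)\xto{\varkappa_1} H_1^\varkappa (G,M\tw)\to 0,$$
where each $\hat{H}^{-1}(C_2(x),M\tw)$ is computed for the restriction of $M\tw$ to the cyclic subgroup $C_2(x)$. I would then rewrite the indexing set of the left-hand sum: the assignment $x\mapsto C_2(x)=\{1,x\}$ is a bijection between the elements of order two in $G$ and the subgroups of $G$ of order two, because distinct order-two elements generate distinct subgroups and each order-two subgroup has a unique nontrivial element, which has order two. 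Hence the left-hand term is literally $\bigoplus_{C_2\subset G}\hat{H}^{-1}(C_2,M\tw)$, matching the statement.

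Finally I would invoke Proposition \ref{pd} in degree $k=1$, which supplies the isomorphism $\Delta_1:H^{n-2}_\la(G,M)\xto{\cong}H_1^\varkappa(G,M\tw)$. Replacing $H_1^\varkappa(G,M\tw)$ by $H^{n-2}_\la(G,M)$ via $\Delta_1^{-1}$ converts the surjection $\varkappa_1$ into a surjection onto $H^{n-2}_\la(G,M)$ and preserves exactness, yielding precisely
$$\bigoplus_{C_2\subset G}\hat{H}^{-1}(C_2,M\tw)\to \hat{H}^{-1}(G,M\tw)\to H^{n-2}_\la(G,M)\to 0.$$
The main point demanding care is the bookkeeping at the edge of the duality range: Proposition \ref{pd} provides $\Delta_k$ only for $0\le k\le n-2$, so the substitution at $k=1$ is valid exactly when $n\ge 3$. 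As the left-hand sum is nonzero only if $G$ has an element of order two, in which case $n$ is even, the sole remaining case is $n=2$, that is $G=C_2$; there $\Delta_1$ falls outside the established range, so the terminal term is not controlled by duality and this degenerate case must be examined on its own. Apart from this boundary situation, the corollary is an immediate consequence of Propositions \ref{ox=2} and \ref{pd}.
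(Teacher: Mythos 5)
Your argument is exactly the paper's proof: the paper obtains Corollary \ref{601} precisely as the composite of Proposition \ref{ox=2} applied to the coefficient module $M\tw$ with the duality isomorphism $\Delta_1$ of Proposition \ref{pd}, together with the observation that elements of order two correspond bijectively to subgroups of order two. Your closing caveat about the range of $\Delta_1$, however, is not mere bookkeeping: the corollary as stated genuinely fails for $G=C_2$ (order $n=2$). Indeed, take $M=\Z$ with trivial action; then $\hat H^{-1}(C_2,\Z\tw)\cong \Z/2\Z$, the left-hand map of the sequence is the identity (the only order-two subgroup is $G$ itself), while $H^{n-2}_\la(C_2,\Z)=H^0_\la(C_2,\Z)=\Z$, so the sequence $\Z/2\Z\to\Z/2\Z\to\Z\to 0$ cannot be exact. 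Equivalently, $H_1^\varkappa(C_2,\Z\tw)=0\neq\Z=H^0_\la(C_2,\Z)$, i.e.\ the duality of Proposition \ref{pd} really does break down at $k=n-1$. So the statement (and the paper's one-line proof) implicitly requires $n\geq 3$; your instinct to isolate the case $n=2$ is the correct reading, except that this degenerate case must actually be excluded rather than examined and proved.
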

This follows directly from  the Proposition \ref{ox=2} and the Poincar\'e duality (\ref{pd}). 



\begin{thebibliography}{100}

	\bibitem{aw} {\sc M.  Atiyah} and {\sc C. T. C. Wall} Cohomology of groups. In W. S. Cassels, A. Fr\" ohlich (Eds.), Algebraic Number Theory. London Academic Press, (1967), 94-115.
	
	\bibitem{ch10} {\sc N. Bourbaki}. Alg\`ebre:
	Chapitre 10. Alg\`ebre homologique. Springer-Verlag, Berlin, New York, 2007. 
	
	\bibitem{gdf} {\sc S. Morita}. Geometry of differential forms. Translations of Mathematical Monographs. AMS. 2001. 

	
	
	\bibitem{jalg} M. Pirashvili. Symmetric cohomology of groups. J. of Algebra. 509(2018), 397-418.
	\bibitem{hha} M. Pirashvili. Crossed modules and symmetric cohomology of groups. Homology, Homotopy and Applications 22(2020), 123 --134.
	\bibitem{fg} J.-P. Serre. Finite Groups: An Introduction.  
	International Press. 2017.
	
	\bibitem{staic} {\sc M. D. Staic}. From 3-algebras to $\Delta$-groups and symmetric cohomology. J. of Algebra 332(2009), 1360-1376.
	\bibitem{staic_h2} {\sc M. D. Staic}. Symmetric cohomology of groups in low dimensions. Arch. Math.93(2009), 205-211.
	
	\bibitem{zarelua} {\sc A. V. Zarelua}.  Exterior homology and cohomology of finite groups. Proc. Steklov Inst. of Math., 1999, 225, 190--218.
	
\end{thebibliography}
\end{document}